\numberwithin{equation}{section} % change the numbering of equations to (#Section.#EquationInSection)
\newtheoremstyle{italic}% Name
{5pt}% Space above
{5pt}% Space below
{\itshape}% Body font
{}% Indent amount
{}% Theorem head font
{}% Punctuation after theorem head
{.5em}% Space after theorem head
{\bfseries{\thmname{#1}~\thmnumber{#2}.}\thmnote{~\textnormal{(#3)}}}% Theorem head spec (can be left empty, meaning ‘normal’)
\newtheoremstyle{upright}% Name
{5pt}% Space above
{5pt}% Space below
{\upshape}% Body font
{}% Indent amount
{\bfseries}% Theorem head font
{}% Punctuation after theorem head
{.5em}% Space after theorem head
{\bfseries{\thmname{#1}~\thmnumber{#2}.}\thmnote{~\textnormal{(\textit{#3}\textrm{)}}}}% Theorem head spec (can be left empty, meaning ‘normal’)
\theoremstyle{italic}
\newtheorem{theorem}{Theorem}[section]
\newtheorem{lemma}[theorem]{Lemma}
\newtheorem{corollary}[theorem]{Corollary}
\theoremstyle{upright}
\newtheorem{remark}[theorem]{Remark}
\newtheorem{assumption}[theorem]{Assumption}
\newtheorem{example}[theorem]{Example}
\newtheorem{excont}{Example}[section]
\newcommand\blfootnote[1]{%
  \begingroup
  \renewcommand\thefootnote{}\footnote{#1}%
  \addtocounter{footnote}{-1}%
  \endgroup
}
\newcommand{\euler}{\mathrm{e}} % euler constant
\newcommand{\dinf}{\textnormal{d}} % upright d
\newcommand{\Nat}{\mathbb{N}} % natural numbers starting at 1
\newcommand{\Prob}[1]{\mathbb{P}(#1)} % probability
\newcommand{\bld}[1]{\mathbf{#1}} % bold notation for vectors
\newcommand{\eb}[1]{\bld{e}_{#1}} % column vector of zeros with a one at position #1
\newcommand{\oneb}{\bld{1}} % column vector of ones
\newcommand{\la}{\lambda} % arrival rate
\newcommand{\al}{\alpha}
\newcommand{\be}{\beta}
\newcommand{\ga}{\gamma}
\newcommand{\La}{\Lambda}
\title{Product-form solutions for a class of structured multi-dimensional Markov processes}
\author{Jori Selen\footnotemark[1] \footnotemark[2], Ivo J.B.F. Adan\footnotemark[1] \footnotemark[2], and Johan S.H. van Leeuwaarden\footnotemark[2]}
\begin{document}

\maketitle%
\renewcommand{\thefootnote}{\fnsymbol{footnote}}%
\footnotetext[1]{Department of Mechanical Engineering, Eindhoven University of Technology, The Netherlands}%
\footnotetext[2]{Department of Mathematics and Computer Science, Eindhoven University of Technology, The Netherlands}%
\renewcommand{\thefootnote}{\arabic{footnote}}%
\blfootnote{E-mail address: {\tt j.selen@tue.nl}}%
\renewcommand{\thefootnote}{\arabic{footnote}} \setcounter{footnote}{0}%

\begin{abstract}%
Motivated by queueing systems with heterogeneous parallel servers, we consider a class of structured multi-dimensional Markov processes whose state space can be partitioned into two parts: a finite set of boundary states and a structured multi-dimensional set of states, exactly one dimension of which is infinite. Using \textit{separation of variables}, we show that the equilibrium distribution, typically of the queue length, can be represented as a linear combination of product forms. For an important subclass of queueing systems, we characterize explicitly the waiting time distribution in terms of mixtures of exponentials.
\end{abstract}%

%%%%%%%%%%%%%%%%%%%%%%%%%%%%%%%%%%%%%%%%%%%%%%%%%%%%%%%
%%%%%%%%%%%%%%%%%%%%%%%%%%%%%%%%%%%%%%%%%%%%%%%%%%%%%%%
%%%%%%%%%%%%%%%%%%%%% NEW SECTION %%%%%%%%%%%%%%%%%%%%%
%%%%%%%%%%%%%%%%%%%%%%%%%%%%%%%%%%%%%%%%%%%%%%%%%%%%%%%
%%%%%%%%%%%%%%%%%%%%%%%%%%%%%%%%%%%%%%%%%%%%%%%%%%%%%%%

\section{Introduction}%
\label{sec:introduction}%

Motivated by queueing systems with heterogeneous parallel servers, we introduce a class of structured multi-dimensional Markov processes with one infinite dimension. While most research on multi-server queueing systems focuses on identical, or homogeneous servers, our class includes multi-server queueing systems with nonidentical, or heterogeneous servers.

Among the key examples of queueing systems included in this class are heterogeneous parallel servers with batch arrivals and service times consisting of two exponential phases (e.g.~Erlang-2 \cite{shapiro_ME2s1966}, Coxian-2 \cite{bertsimas_EkC2s1988}, hyperexponential-2 \cite{smit_numericHyper1983,smit_diffCustHyper1983}), multi-server queueing systems with service interruptions \cite{avi_serviceInterruptions1968}, setup times \cite{economou_setupTimes2005,adan_setupCosts2010} or batch service \cite{adan_batchService2000}, and a single exponential server fed by a superposition of independent, interrupted Poisson processes. Such queueing systems naturally arise in manufacturing, where jobs may arrive in batches, to be subsequently processed on multiple parallel batch machines. These machines can be nonidentical, differing in processing speed and processing variability and may require setup time after having been idle. For an overview of stochastic models, and queueing models in particular, for manufacturing systems we refer to \cite{buzacott_manufacturingSystems1993}.

The class of structured multi-dimensional Markov processes presented in this paper is an extension of the one in \cite{adan_zaragoza1999}. Whereas \cite{adan_zaragoza1999} deals with identical parallel servers, we allow for nonidentical parallel servers and batch arrivals. For this extended class we will determine the equilibrium distribution using a \textit{separation of variables} technique. This technique allows us to express the equilibrium distribution, typically of the queue length, as a linear combination of product forms (in terms of eigenvalues, and eigenvectors of the finite dimensions). Alternative methods to determine the equilibrium distribution for structured multi-dimensional Markov processes are the matrix-geometric method \cite{neuts_matrixGeometric1981} and the spectral expansion method \cite{mitrani_comparisonSpectralAndMatrixGeom1995}. The matrix-geometric method expresses the equilibrium distribution in terms of the rate matrix $R$, which is the minimal nonnegative solution of a nonlinear matrix equation. When $R$ is diagonalizable, the parameters of the product forms of the separation of variables technique are the eigenvalues and left eigenvectors of $R$. The spectral expansion method treats the equilibrium equations as a homogeneous vector difference equation with constant coefficients, and by substituting product forms, reduces to a single characteristic matrix polynomial for all eigenvalues.

By employing separation of variables, however, the single equation for all eigenvalues can be \textit{decomposed} into a system of equations for (much) fewer eigenvalues. Further, we can express the equilibrium distribution as a linear combination of product forms, the parameters of which can be obtained explicitly. Moreover, for an important subclass of queueing systems, we are able to obtain explicitly the waiting time distribution in terms of mixtures of exponentials.

%%%%%%%%%%%%%%%%%%%%%%%%%%%%%%%%%%%%%%%%%%%%%%%%%%%%%%%
%%%%%%%%%%%%%%%%%%%%%%%%%%%%%%%%%%%%%%%%%%%%%%%%%%%%%%%
%%%%%%%%%%%%%%%%%%% NEW SUBSECTION %%%%%%%%%%%%%%%%%%%%
%%%%%%%%%%%%%%%%%%%%%%%%%%%%%%%%%%%%%%%%%%%%%%%%%%%%%%%
%%%%%%%%%%%%%%%%%%%%%%%%%%%%%%%%%%%%%%%%%%%%%%%%%%%%%%%

\subsection{Motivating examples}%
\label{subsec:examples}%

We present three motivating examples that fall within the class of processes considered in this paper. The first two examples are extensions of examples in \cite{adan_zaragoza1999}. The third example serves as a running example throughout the paper.
\begin{example}[Parallel heterogeneous servers with Erlang-2 services] \label{ex:asymmetricK=1}
This example is an extension of the class of Markov processes presented in \cite{adan_zaragoza1999} by allowing different service behavior for each server. Jobs arrive according to a Poisson process with rate $\la$. Jobs are served FCFS by $c$ heterogeneous servers. The service time of server $i$ consists of two exponential phases, both with mean $1/\mu_i$.
\end{example}
\begin{example}[The $M^X/M/c$ queue with service interruptions] \label{ex:symmetricK>1}
This example considers the same class as in \cite{adan_zaragoza1999}, but now allowing for batch arrivals. Jobs arrive in batches of sizes that range from 1 to $K$. Batches of size $k$ arrive according to a Poisson process with rate $\la_k$. Jobs are served FCFS by $c$ identical parallel servers. Each server, when operative, serves jobs with rate $\mu$. However, servers are subject to breakdowns. The times that servers are operative are exponentially distributed with parameter $\theta$. The repair times are exponentially distributed with parameter $\nu$.
\end{example}
\begin{example}[Parallel heterogeneous servers with hypoexponential-2 service times and Poisson batch arrivals] \label{ex:running}
This example serves as a running example throughout the paper. We consider a queueing model that incorporates both extensions mentioned above. Jobs arrive in batches of sizes that range from 1 to $K$. Batches of size $k$ arrive according to a Poisson process with rate $\la_k$. Jobs are served FCFS by $c$ parallel heterogeneous servers. The service time of server $i$ consists of two exponential phases with mean $1/\mu_{1,i}$ and $1/\mu_{2,i}$, respectively.
\end{example}
%

%%%%%%%%%%%%%%%%%%%%%%%%%%%%%%%%%%%%%%%%%%%%%%%%%%%%%%%
%%%%%%%%%%%%%%%%%%%%%%%%%%%%%%%%%%%%%%%%%%%%%%%%%%%%%%%
%%%%%%%%%%%%%%%%%%% NEW SUBSECTION %%%%%%%%%%%%%%%%%%%%
%%%%%%%%%%%%%%%%%%%%%%%%%%%%%%%%%%%%%%%%%%%%%%%%%%%%%%%
%%%%%%%%%%%%%%%%%%%%%%%%%%%%%%%%%%%%%%%%%%%%%%%%%%%%%%%

\subsection{Description of the class of processes}%
\label{subsec:class_MP}%

We consider a class of irreducible multi-dimensional Markov processes on the state space $V \cup W$ with $V$ a finite set and $W$ defined by all states $\bld{n} = (n_0,n_1,\ldots,n_c)$ with $n_0 \in \Nat_0$ and $n_i \in \{0,1\}, ~ i = 1,\ldots,c$. The set $V$ serves to describe possibly less structured boundary behavior. For all three motivating examples of Section~\ref{subsec:examples}, the set $V$ consists of the states with one or more idle servers. In this paper we primarily focus on the analysis of the structured infinite set of states $W$. The determination of the probabilities in the finite set $V$ is of minor importance and not discussed here.

In the $n_0$-dimension, we allow for jumps in the negative direction of any size, whereas jumps in the positive direction are of maximum size $K$ with $K$ some finite positive integer. We call the states in $V$ and the states with $n_0 < K$ the \textit{boundary states}. For the states in $V$ we assume that no transitions to states $\bld{n}$ with $n_0 \ge K$ are possible, i.e., the only way to enter the states $\bld{n}$ with $n_0 \ge K$ from the set $V$ is via the states $\bld{n}$ with $n_0 < K$.

Transitions in the $(n_0,n_i)$-plane with $n_0 \ge 0$ are assumed to be independent of transitions in other $(n_0,n_j)$-planes, $j \neq i$. This allows us to describe in isolation the transition structure on this two-dimensional $(n_0,n_i)$-plane. More specifically, for the states with $n_0 \ge 0$, the following transitions in the $(n_0,n_i)$-plane are possible, with $k = -n_0,-n_0+1,\ldots,K$:
\begin{itemize}%
\item From $(n_0,0)$ to $(n_0+k,1)$ with rate $a_{k,i}$.
\item From $(n_0,0)$ to $(n_0+k,0)$ with rate $b_{k,i}$.
\item From $(n_0,1)$ to $(n_0+k,1)$ with rate $c_{k,i}$.
\item From $(n_0,1)$ to $(n_0+k,0)$ with rate $d_{k,i}$.
\item From $(n_0,n_i)$ to the states in $V$ with total rate $\sum_{k = -\infty}^{-n_0-1} \bigl( (1-n_i)(a_{k,i}+b_{k,i})+n_i(c_{k,i}+d_{k,i}) \bigr)$.
\end{itemize}%
Hence, transitions from $(n_0,\cdot)$ are only possible to states $(n,\cdot)$ with $n \le n_0 + K$. The transition rate diagram of the $(n_0,n_i)$-plane is shown in Figure~\ref{fig:transitions_n0_ni}.

\begin{figure}%
\centering%
\includegraphics{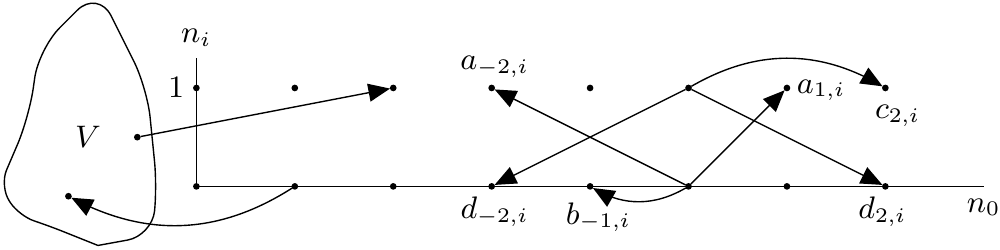}% PDF figure
\caption{Transition rate diagram of the process in the $(n_0,n_i)$-plane.}%
\label{fig:transitions_n0_ni}%
\end{figure}%

We assume that the Markov process is ergodic and denote by $p(\bld{n})$ the equilibrium probability of being in state $\bld{n}$. One of the main results obtained in this paper is the following expression for the equilibrium distribution in terms of a linear combination of $2^c K$ product forms:
\begin{equation}%
p(\bld{n}) = \sum_{j = 1}^{2^c K} \al_j \be_{0,j}^{n_0} \be_{1,j}^{n_1} \cdots \be_{c,j}^{n_c}, \quad \bld{n} \in W, \label{eqn:p(n)_intro}
\end{equation}%
where $\be_{i,j}$ and $\al_j$ are possibly complex-valued constants. We are able to establish \eqref{eqn:p(n)_intro} for the case $K = 1$, and also for $K \ge 1$ provided the Markov process is \textit{symmetric}, by which we mean that the transition rate diagram of each $(n_0,n_i)$-plane is identical. More specifically, by symmetry we mean that, for given $k$ all $a_{k,i}, ~ i = 1,\ldots,c$, are equal and similarly for the other rates. For $K \ge 2$ and nonsymmetric Markov processes (all Markov processes that are not symmetric), the proposed method might still work, as is supported by numerical evidence (not shown in this paper). However, proving that \eqref{eqn:p(n)_intro} is valid in this case is much more involved. In order to establish \eqref{eqn:p(n)_intro}, we employ a separation of variables technique, which is well known from the theory of partial differential equations \cite{garabedian_PDE1967} and has been applied before to an $E_k/E_r/c$ queueing model \cite{adan_EkErc1996}. Using this technique, a relation between $\be_{i,j}, ~ i = 1,\ldots,c$, and $\be_{0,j}$ is derived. Instrumental for this technique is to identify the constants $\be_{0,j}$ as the roots within the unit circle of a certain equation.

For symmetric Markov processes we can use an aggregated state space described by $(n_0,m)$ with $m = \sum_{i = 1}^c n_i$, and which forms a semi-infinite strip of states for which the equilibrium probabilities can be expressed as
\begin{equation}%
p(n_0,m) = \sum_{j = 1}^{K(c+1)} \ga_j \be_{0,j}^{n_0} \omega_j(m), \quad n_0 \in \Nat_0, ~ m = 0,\ldots,c, \label{eqn:p(n,m)_intro}
\end{equation}%
where $\ga_j$ are possibly complex-valued constants and $\omega_j$ is a row vector, the entries of which are given by $\omega_j(m) = \sum_{n_1 + \cdots + n_c = m} \be_{1,j}^{n_1} \cdots \be_{c,j}^{n_c}$ with $\be_{i,j}$ as in \eqref{eqn:p(n)_intro}.

In \cite{adan_zaragoza1999} the same class of symmetric Markov processes is studied for the special case $K = 1$, and a generating function technique is used to determine the equilibrium distribution. This generating function technique, however, does not seem applicable to the nonsymmetric processes studied in the this paper, which is why we use separation of variables instead.

We do use a generating function technique, together with \eqref{eqn:p(n,m)_intro} to describe the distribution of the first passage time to the set $V$, given that at time $t = 0$, the process starts in state $(n_0,m)$ and there are no transitions $a_k,\ldots,d_k$ with $k > 0$ for $t \ge 0$. For queueing systems, this first passage time is typically tantamount to the waiting time distribution (see e.g. \cite{adan_locking2000,bertsimas_FCFSWaitingTimeGGs1988}), which is also the case in this paper.

%%%%%%%%%%%%%%%%%%%%%%%%%%%%%%%%%%%%%%%%%%%%%%%%%%%%%%%
%%%%%%%%%%%%%%%%%%%%%%%%%%%%%%%%%%%%%%%%%%%%%%%%%%%%%%%
%%%%%%%%%%%%%%%%%%% NEW SUBSECTION %%%%%%%%%%%%%%%%%%%%
%%%%%%%%%%%%%%%%%%%%%%%%%%%%%%%%%%%%%%%%%%%%%%%%%%%%%%%
%%%%%%%%%%%%%%%%%%%%%%%%%%%%%%%%%%%%%%%%%%%%%%%%%%%%%%%

\subsection{Parameters of the motivating examples}%
\label{subsec:parameters_examples}%

We now briefly indicate how the motivating examples of Section~\ref{subsec:examples} fit in the class of Markov processes.
\begin{excont}[Continued]
Here, $n_0$ represents the number of jobs waiting in the queue and $n_i, ~ i = 1,\ldots,c$ is the number of finished phases at server $i$. We have $a_{0,i} = \mu_i$, $b_{1,i} = c_{1,i} = \la_{i}$ with $\sum_i \la_{i} = \la$, $d_{-1,i} = \mu_i$ and all other rates are equal to zero. Note that the rates $\la_{i} > 0$ can be chosen arbitrarily as long as they add up to $\la$.
\end{excont}
\begin{excont}[Continued]
In this case, $n_0$ represents the number of jobs waiting in the queue and $n_i, ~ i = 1,\ldots,c$ states whether server $i$ is operative. As the servers are identical we can omit the index $i$ to obtain the rates $a_0 = \nu$, $c_{-1} = \mu$, $b_k = c_k = \la_k/c, ~ (k = 1,\ldots,K)$, $d_0 = \theta$ and all other rates are equal to zero.
\end{excont}
\begin{excont}[Continued]
For this model, $n_0$ represents the number of jobs waiting in the queue and $n_i, ~ i = 1,\ldots,c$ is the number of finished phases at server $i$. We have $a_{0,i} = \mu_{1,i}$, $b_{k,i} = c_{k,i} = \la_{k,i} ~ (k = 1,\ldots,K)$ with $\sum_i \la_{k,i} = \la_k$ the arrival rate of batches of size $k$, $d_{-1,i}=\mu_{2,i}$ and all other rates are equal to zero.
\end{excont}
%

%%%%%%%%%%%%%%%%%%%%%%%%%%%%%%%%%%%%%%%%%%%%%%%%%%%%%%%
%%%%%%%%%%%%%%%%%%%%%%%%%%%%%%%%%%%%%%%%%%%%%%%%%%%%%%%
%%%%%%%%%%%%%%%%%%% NEW SUBSECTION %%%%%%%%%%%%%%%%%%%%
%%%%%%%%%%%%%%%%%%%%%%%%%%%%%%%%%%%%%%%%%%%%%%%%%%%%%%%
%%%%%%%%%%%%%%%%%%%%%%%%%%%%%%%%%%%%%%%%%%%%%%%%%%%%%%%

\subsection{Structure of the paper}%
\label{subsec:structure_paper}%

The remainder of the paper is organized as follows. In Section~\ref{sec:conditions} we introduce some further modeling assumptions and derive the ergodicity condition of the multi-dimensional Markov process. Section~\ref{sec:equilibrium_equations} concerns the analysis of the equilibrium equations to find the equilibrium distribution for jumps of maximum size $K = 1$. Symmetric processes with $K \ge 1$ are studied in Section~\ref{sec:symmetric_processes}. The aggregated state concept is introduced in Section~\ref{sec:aggregated_state_concept}. In Section~\ref{sec:absorption_times} we study the first passage time to the set $V$. Finally, in Section~\ref{sec:conclusion} we present some conclusions and directions for further research.

%%%%%%%%%%%%%%%%%%%%%%%%%%%%%%%%%%%%%%%%%%%%%%%%%%%%%%%
%%%%%%%%%%%%%%%%%%%%%%%%%%%%%%%%%%%%%%%%%%%%%%%%%%%%%%%
%%%%%%%%%%%%%%%%%%%%% NEW SECTION %%%%%%%%%%%%%%%%%%%%%
%%%%%%%%%%%%%%%%%%%%%%%%%%%%%%%%%%%%%%%%%%%%%%%%%%%%%%%
%%%%%%%%%%%%%%%%%%%%%%%%%%%%%%%%%%%%%%%%%%%%%%%%%%%%%%%

\section{Modeling assumptions}%
\label{sec:conditions}%

In this section we impose some conditions on the rates of the Markov process introduced in Section~\ref{sec:introduction} and we determine the ergodicity condition of the process. First, let us introduce the generating functions
\begin{equation}%
\begin{array}{ll}%
A_i(z) := \displaystyle \sum_{k = -\infty}^K a_{k,i} z^{K - k}, & B_i(z) := \displaystyle \sum_{k = -\infty}^K b_{k,i} z^{K - k}, \\
C_i(z) := \displaystyle \sum_{k = -\infty}^K c_{k,i} z^{K - k}, & D_i(z) := \displaystyle \sum_{k = -\infty}^K d_{k,i} z^{K - k}.
\end{array}%
\end{equation}%
We impose the following assumptions on the rates of the Markov process:
\begin{assumption}\label{as:rate}%
For $i = 1,2,\ldots,c$,
\begin{enumerate}[label = \textup{(\roman*)}]%
\item $A_i(1),B_i(1),C_i(1),D_i(1) < \infty$.
\item $A_i(1),D_i(1) > 0$.
\item $A_i'(1),B_i'(1),C_i'(1),D_i'(1)< \infty$.
\item $a_{K,i} = 0$ or $d_{K,i} = 0$.
\item $b_{K,i} = c_{K,i} \neq 0$.
\end{enumerate}%
\end{assumption}

The assumptions (i) and (iii) are intuitively clear: both the total outgoing rate and expected jump size should be finite. Assumption (ii) is imposed to exclude cases in which vertical transitions in the $(n_0,n_i)$-plane are only possible in one direction. Finally, assumptions (iv) and (v) are used to exclude exceptional cases. In Section~\ref{sec:equilibrium_equations} we show how these assumptions are used. Note that assumptions (iv) and (v) are satisfied for the motivating examples presented in Section~\ref{subsec:examples}.

We next present the ergodicity condition.
\begin{lemma}%
The Markov process is ergodic if and only if
\begin{align}%
0 &< \sum_{i = 1}^{c} \frac{1}{A_i(1) + D_i(1)} \Bigl( D_i(1)(A_i'(1) - KA_i(1) + B_i'(1) - KB_i(1)) \notag \\
&\quad + A_i(1)(C_i'(1) - KC_i(1) + D_i'(1) - KD_i(1)) \Bigr). \label{eqn:ergo_necessary}
\end{align}%
\end{lemma}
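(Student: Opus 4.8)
The plan is to establish ergodicity via the standard drift analysis for the "quasi-birth-death"-type structure of the process. Since the state space is $V \cup W$ with $V$ finite, ergodicity is governed entirely by the behavior of the process in the infinite $n_0$-direction as $n_0 \to \infty$. Far from the boundary (for $n_0 \ge K$), the transition rates become independent of $n_0$: the dynamics in each $(n_0,n_i)$-plane are spatially homogeneous in the $n_0$-coordinate, so the process behaves like a Markov-modulated random walk on $\Nat_0$, where the modulating (phase) component is the finite vector $(n_1,\ldots,n_c) \in \{0,1\}^c$. By the mean drift criterion for such processes (see e.g.\ the foster--lyapunov approach or Neuts' theory), the process is ergodic if and only if the stationary mean drift of $n_0$ under the phase-stationary distribution is strictly negative, equivalently, the mean upward increment is strictly dominated by the mean downward increment.

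First I would exploit the independence assumption across planes: transitions in the $(n_0,n_i)$-plane are independent of the other planes, so each coordinate $n_i$ evolves as an autonomous two-state phase process modulating the common $n_0$. Thus the total drift of $n_0$ decomposes additively as a sum over $i = 1,\ldots,c$ of the per-plane drift contributions. For a fixed $i$, the phase $n_i \in \{0,1\}$ is itself a continuous-time Markov chain; conditioning on the long-run fractions of time spent in phases $0$ and $1$ gives the weighting for the drift. The generating functions make this clean: evaluated at $z=1$, $A_i(1) = \sum_k a_{k,i}$ is the total rate of the $(0)\to(1)$ transitions and $D_i(1) = \sum_k d_{k,i}$ the total rate of the $(1)\to(0)$ transitions, so the marginal phase chain for plane $i$ has stationary distribution proportional to $(D_i(1),\, A_i(1))$, i.e.\ phase $0$ has probability $D_i(1)/(A_i(1)+D_i(1))$ and phase $1$ has probability $A_i(1)/(A_i(1)+D_i(1))$. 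This explains the prefactor $1/(A_i(1)+D_i(1))$ and the cross-weighting by $D_i(1)$ and $A_i(1)$ in \eqref{eqn:ergo_necessary}.

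Next I would compute the mean drift of $n_0$ in each phase. A transition of type $a_{k,i}$ or $b_{k,i}$ (from phase $0$) shifts $n_0$ by $k$, and since $A_i(z)=\sum_k a_{k,i}z^{K-k}$, differentiating and evaluating at $z=1$ converts the generating function into the relevant moment: the expected signed jump contributed by the $a$-transitions is $-A_i'(1)+KA_i(1)$ (the $K$ arising from the $z^{K-k}$ convention, where $A_i(1)$ counts total rate and $A_i'(1)$ encodes $\sum_k (K-k)a_{k,i}$), so $A_i'(1)-KA_i(1) = \sum_k k\,a_{k,i}$ up to sign is the mean rate of change of $n_0$ due to $a$-jumps, and similarly for $b,c,d$. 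Summing the phase-$0$ contributions $A_i'(1)-KA_i(1)+B_i'(1)-KB_i(1)$ weighted by the phase-$0$ probability $D_i(1)$, and the phase-$1$ contributions $C_i'(1)-KC_i(1)+D_i'(1)-KD_i(1)$ weighted by the phase-$1$ probability $A_i(1)$, then dividing by $A_i(1)+D_i(1)$ and summing over $i$, reproduces exactly the right-hand side of \eqref{eqn:ergo_necessary}. The condition that this quantity be strictly positive is then the statement that the net drift of $n_0$ is in the stabilizing (downward) direction, under the sign convention induced by the $z^{K-k}$ bookkeeping.

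The main obstacle, and the step requiring care, is the rigorous justification that the mean-drift criterion is both necessary and sufficient for a process of this exact structure — a random walk on $\Nat_0$ modulated by a finite phase living on $\{0,1\}^c$, with possibly unbounded negative jumps but bounded (by $K$) positive jumps and a finite boundary set $V$. I would invoke a Foster--Lyapunov argument with a linear test function in $n_0$ (plus a bounded phase-dependent correction term solving a Poisson equation for the phase chain) to prove sufficiency, and a matching instability/transience argument for the converse. The unbounded downward jumps need checking to ensure the drift is well-defined, but this is guaranteed by Assumption~\ref{as:rate}(iii), which makes all the relevant derivatives $A_i'(1),\ldots,D_i'(1)$ finite; Assumption~\ref{as:rate}(ii) ($A_i(1),D_i(1)>0$) ensures each phase chain is genuinely irreducible so the stationary phase distribution is well-defined and strictly positive, which is precisely what makes the additive-over-$i$ decomposition legitimate. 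The finiteness of $V$ and the fact that the infinite part $W$ is entered only through $n_0<K$ (stated in the model description) guarantee that the boundary does not affect the stability classification, so the drift computed in the homogeneous region fully determines ergodicity.
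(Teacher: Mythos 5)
Your proposal is correct and follows essentially the same route as the paper: invoke the mean drift condition for the Markov-modulated random walk structure, decompose the drift additively over the $(n_0,n_i)$-planes, identify the per-plane stationary phase distribution $\bigl(D_i(1),A_i(1)\bigr)/(A_i(1)+D_i(1))$, and convert the mean jump sizes via $\sum_k k\,a_{k,i} = KA_i(1)-A_i'(1)$ and its analogues. The paper simply cites Neuts' mean drift condition rather than spelling out a Foster--Lyapunov justification, but the substance of the argument is identical.
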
%
\begin{proof}%
We require that the mean aggregate drift in the negative $n_0$-direction is larger than the mean aggregate drift in the positive $n_0$-direction; see Neuts' mean drift condition \cite{neuts_matrixGeometric1981}. This gives
\begin{equation}%
\sum_{i = 1}^{c} \sum_{k = -\infty}^{-1} k \pi_i \begin{pmatrix} b_{k,i} & a_{k,i} \\ d_{k,i} & c_{k,i} \end{pmatrix} \oneb > \sum_{i = 1}^{c} \sum_{k = 1}^K k \pi_i \begin{pmatrix} b_{k,i} & a_{k,i} \\ d_{k,i} & c_{k,i} \end{pmatrix} \oneb, \label{eqn:mean_aggregate_drift}
\end{equation}%
where $\oneb = \begin{pmatrix} 1 & 1 \end{pmatrix}^T$ and $\pi_i = \begin{pmatrix} \pi_i(0) & \pi_i(1) \end{pmatrix}$ is the equilibrium distribution of the $n_i$-dimension in the $(n_0,n_i)$-plane, which is the solution to
\begin{equation}%
\pi_i \begin{pmatrix} -A_i(1) & A_i(1) \\ D_i(1) & -D_i(1) \end{pmatrix} = 0.
\end{equation}%
Clearly, $\pi_i = \begin{pmatrix} \frac{D_i(1)}{A_i(1) + D_i(1)} & \frac{A_i(1)}{A_i(1) + D_i(1)} \end{pmatrix}$, and rewriting \eqref{eqn:mean_aggregate_drift} gives
\begin{equation}%
\sum_{i = 1}^{c} \frac{1}{A_i(1) + D_i(1)} \Bigl( D_i(1) \sum_{k = -\infty}^K k (a_{k,i} + b_{k,i}) + A_i(1) \sum_{k = -\infty}^K k(c_{k,i} + d_{k,i}) \Bigr) < 0.
\end{equation}%
Substituting $\sum_{k = -\infty}^K k a_{k,i} = K A_i(1) - A_i'(1)$ and similarly for the sums of $b_{k,i}$, $c_{k,i}$ and $d_{k,i}$ yields \eqref{eqn:ergo_necessary}.
\end{proof}%

We henceforth assume that condition~\eqref{eqn:ergo_necessary} holds.

\begin{remark}[Ergodicity condition for symmetric processes]%
For Markov process with identical rates in each $(n_0,n_i)$-plane, we can omit the index $i$ in \eqref{eqn:ergo_necessary}, so that \eqref{eqn:ergo_necessary} simplifies to
\begin{equation}%
0 < D(1)(A'(1) - KA(1) + B'(1) - KB(1)) + A(1)(C'(1) - KC(1) + D'(1) - KD(1)). \label{eqn:ergo_symm}
\end{equation}%
\end{remark}%

%%%%%%%%%%%%%%%%%%%%%%%%%%%%%%%%%%%%%%%%%%%%%%%%%%%%%%%
%%%%%%%%%%%%%%%%%%%%%%%%%%%%%%%%%%%%%%%%%%%%%%%%%%%%%%%
%%%%%%%%%%%%%%%%%%%%% NEW SECTION %%%%%%%%%%%%%%%%%%%%%
%%%%%%%%%%%%%%%%%%%%%%%%%%%%%%%%%%%%%%%%%%%%%%%%%%%%%%%
%%%%%%%%%%%%%%%%%%%%%%%%%%%%%%%%%%%%%%%%%%%%%%%%%%%%%%%

\section{Analysis of the equilibrium equations}%
\label{sec:equilibrium_equations}%

We now derive the equilibrium distribution $p(\bld{n})$ of the Markov process described in Section~\ref{subsec:class_MP}. We use a separation of variables technique and exploit the structure of the equilibrium equations for (non-boundary) states in the interior of the state space $W$.

Let $\eb{i}$ denote a row vector of zeros of length $c+1$ with a 1 at position $i$ with $0 \le i \le c$.  By equating the rate out of and the rate into state $\bld{n}$ we obtain
\begin{align}%
&\sum_{i = 1}^c \sum_{k = -\infty}^K \Bigl( (1 - n_i) (a_{k,i} + b_{k,i}) + n_i (c_{k,i} + d_{k,i}) \Bigr) p(\bld{n}) \notag \\
&= \sum_{i = 1}^c \sum_{k = -\infty}^K \Bigl( (1 - n_i)\bigl( b_{k,i} p(\bld{n} - k\eb{0}) + d_{k,i} p(\bld{n} - k\eb{0} + \eb{i}) \bigr) \notag \\
&\quad + n_i \bigl( a_{k,i} p(\bld{n} - k\eb{0} - \eb{i}) + c_{k,i} p(\bld{n} - k\eb{0}) \bigr) \Bigr), \label{eqn:EQ}
\end{align}%
which is valid for all states $\bld{n}$ with $n_0 \ge K$ and $n_i \in \{0,1\}, ~ i = 1,\ldots,c$. The equations \eqref{eqn:EQ} form the \textit{inner equations}, while the equilibrium equations for states with $n_0 < K$ and the set $V$ form the \textit{boundary equations}. As it turns out, the precise form of the boundary equations is not relevant for the first part of the analysis and these equations are therefore not presented. The boundary equations will be used to determine coefficients of the linear combination in Theorems~\ref{th:distK=1}, \ref{th:distSymm} and \ref{th:distAggregated}. Note that the number of boundary equations is equal to $|V| + 2^c K$.

We search for linearly independent candidate solutions $p(\bld{n})$ of \eqref{eqn:EQ} for which $\sum_{\bld{n}} p(\bld{n})$ absolutely converges. We have the following lemma.
\begin{lemma}\label{lem:p(n)}%
If there are $2^c K$ linearly independent and absolutely convergent solutions of \eqref{eqn:EQ}, labeled $p_j(\bld{n}), ~ j = 1,\ldots,2^c K$, then the equilibrium distribution $p(\bld{n})$ can be expressed as
\begin{equation}%
p(\bld{n}) = \sum_{j = 1}^{2^c K} \al_j p_j(\bld{n}), \quad \bld{n} \in W, \label{eqn:p(n)}
\end{equation}%
where the \textup{(}possibly complex-valued\textup{)} constants $\al_j$ are uniquely determined from the boundary equations and the normalization condition.
\end{lemma}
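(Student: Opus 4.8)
The plan is to exploit two facts: the inner equations \eqref{eqn:EQ} are linear and homogeneous, and ergodicity forces the equilibrium distribution to be unique. First I would observe that, for $\bld{n}$ with $n_0 \ge K$, the system \eqref{eqn:EQ} is homogeneous and linear in the unknowns $p(\bld{n})$, $\bld{n} \in W$. Hence the set of absolutely convergent solutions of \eqref{eqn:EQ} is a vector space, and any finite linear combination $\sum_{j} \al_j p_j(\bld{n})$ is again an absolutely convergent solution (absolute convergence of each summand is inherited by the combination). By hypothesis the $p_j$ are $2^c K$ linearly independent elements of this space.

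Next I would bring in ergodicity: the process has a unique equilibrium distribution $p$, characterized as the unique absolutely summable solution of the full balance equations normalized by $\sum_{\bld{n}} p(\bld{n}) = 1$. Its restriction to $W$ is an absolutely convergent solution of \eqref{eqn:EQ}, so the decisive point is that $p$ restricted to $W$ must lie in the span of $p_1, \ldots, p_{2^c K}$. This holds provided the space of absolutely convergent solutions of \eqref{eqn:EQ} has dimension exactly $2^c K$; the hypothesis already supplies $2^c K$ independent solutions (the lower bound), so only the matching upper bound remains, and I expect this to be the main obstacle. To obtain it I would read \eqref{eqn:EQ} as a two-sided vector recurrence in the level $n_0$, with the $2^c$ configurations of $(n_1,\ldots,n_c)$ as internal coordinates, insert the geometric ansatz $p(\bld{n}) = \be_0^{\,n_0} \bld{v}$, and count the roots $\be_0$ of the resulting characteristic equation that lie strictly inside the unit circle, since exactly these yield absolutely convergent solutions. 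The upward jumps being of size at most $K$, together with the $2^c$ internal states, are what make this count equal to $2^c K$; this is precisely the root-counting performed explicitly in the later sections, so at this point I would simply appeal to it rather than redo it.

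Granting that $\{p_j\}$ is a basis of the solution space, it follows that $p(\bld{n}) = \sum_{j=1}^{2^c K} \al_j p_j(\bld{n})$ for $\bld{n} \in W$, with coefficients $\al_j$ that are unique by linear independence. Finally I would indicate how these coefficients are pinned down in practice. Substituting the representation \eqref{eqn:p(n)} into the boundary equations converts them into linear relations among the $2^c K$ coefficients $\al_j$ and the $|V|$ still-unknown probabilities of the states in $V$, giving $|V| + 2^c K$ unknowns against the $|V| + 2^c K$ boundary equations noted earlier. As is standard for the balance equations of an irreducible recurrent chain, these carry exactly one linear dependency, so after the inner equations have been accounted for one boundary equation is redundant and is replaced by the normalization condition, producing a square system. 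Uniqueness of the equilibrium distribution, combined with the linear independence of the $p_j$, then guarantees that this system has a unique solution, which yields the $\al_j$ (and, as a by-product, the probabilities on $V$), completing the argument.
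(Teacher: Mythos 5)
Your overall strategy runs in the opposite direction from the paper's, and the step you yourself flag as ``the main obstacle'' is a genuine gap. You argue top-down: the equilibrium distribution restricted to $W$ is an absolutely convergent solution of \eqref{eqn:EQ}, so it lies in the span of $p_1,\ldots,p_{2^cK}$ \emph{provided} the solution space has dimension exactly $2^cK$, and you propose to get the upper bound by counting roots of a characteristic equation, ``appealing to'' the root counts in the later sections. This is circular: in the paper the exact counts (e.g.\ Corollary~\ref{cor:numberOfSolutionsSolveBeta0}) are deduced from Corollary~\ref{cor:dimensionSolutions}, which is itself a consequence of Lemma~\ref{lem:p(n)} --- the very statement you are proving. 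What the root-location arguments (Lemma~\ref{lem:numberOfRootsK=1} and its analogues) establish independently is only a \emph{lower} bound on the number of product-form solutions. Even granting an exact root count for \eqref{eqn:solve_beta_0}, bounding the dimension of the space of \emph{all} absolutely convergent solutions of \eqref{eqn:EQ} by $2^cK$ requires a completeness argument (ruling out non-separable solutions and polynomial-times-geometric solutions arising from Jordan chains of the characteristic matrix polynomial); this is essentially the machinery of the spectral expansion method, which the paper deliberately avoids. Your sketch does not supply it.

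The paper's proof sidesteps the dimension question entirely by going bottom-up. It takes the candidate $p(\bld{n})=\sum_j \al_j p_j(\bld{n})$, which satisfies the inner equations for any choice of $\al_j$, and observes that the boundary equations form a homogeneous linear system in the $|V|+2^cK$ unknowns $(\al_j, p(v)_{v\in V})$ with $|V|+2^cK$ equations, one of which (the balance equation at $\bld{n}=\bld{0}$) is redundant --- verified by an interchange of summations justified by absolute convergence. Hence a nonzero solution of the full set of balance equations exists (nonzero either because some boundary probability is nonzero or, via linear independence of the $p_j$, because some $\al_j$ is nonzero), and Foster's criterion then yields ergodicity together with the fact that the \emph{normalized} version of this constructed $p$ is the unique equilibrium distribution; uniqueness of the $\al_j$ follows from uniqueness of the equilibrium distribution plus independence of the $p_j$. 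You do correctly identify the count $|V|+2^cK$ and the one redundant equation, but to repair your argument you would either have to adopt this constructive route or supply an independent, non-circular proof of the dimension upper bound.
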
%
\begin{proof}%
The proof is along similar lines as the proof of \cite[Theorem~4.1]{adan_EkErc1996}. For each choice of $\al_j$, the sequence $p(\bld{n})$ given by \eqref{eqn:p(n)} satisfies the equations \eqref{eqn:EQ}. The remaining equations are the boundary equations. These equations form a linear, homogeneous system for the unknown constants $\al_j$ and the boundary probabilities in $V$. The number of equations is equal to the number of unknowns, namely $|V| + 2^c K$. By first omitting the equilibrium equation in state $\bld{n} = \bld{0}$, say, the reduced homogeneous system of equilibrium equations obviously has a nonzero solution. The equilibrium equation in $\bld{n} = \bld{0}$ is automatically satisfied, since inserting $p(\bld{n})$ in the equations in all the other states and changing summations exactly yields the desired equation. Change of summations is allowed, since the sum of $p(\bld{n})$ over all states converges absolutely. It remains to show that $p(\bld{n})$ is a nonzero solution. This readily follows if at least one of the boundary probabilities is nonzero. If all these probabilities are zero, then at least one of the constants $\al_j$ must be nonzero. In this case, the property that the solutions $p_j(\bld{n}), ~ j = 1,\ldots,2^c K$ are linearly independent implies that $p(\bld{n})$ is a nonzero solution. Linear independence means that a linear combination of the solutions $\sum_{j = 1}^{2^c K} \al_j p_j(\bld{n})$ is equal to zero if and only if $\al_j = 0$ for all $j$. Using \cite[Theorem~1]{foster_ergodic1953}, we can conclude that the Markov process is ergodic and that a normalized version of $p(\bld{n})$ produces the equilibrium distribution. The uniqueness of the constants $\al_j$ follows from the uniqueness of the equilibrium distribution and the independence of the solutions $p_j(\bld{n}), ~ j = 1,\ldots,2^c K$.
\end{proof}%

From Lemma~\ref{lem:p(n)} and the uniqueness of the equilibrium distribution we obtain the following result.
\begin{corollary}\label{cor:dimensionSolutions}%
The dimension of the space of absolutely convergent solutions of \eqref{eqn:EQ} is at most $2^c K$.
\end{corollary}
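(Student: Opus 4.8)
The plan is to argue by contradiction: if there were more than $2^c K$ independent absolutely convergent interior solutions, I would build a space of absolutely summable solutions of the full equilibrium equations of dimension at least two, contradicting the uniqueness of the equilibrium distribution. So suppose the space of absolutely convergent solutions of \eqref{eqn:EQ} has dimension strictly greater than $2^c K$, and fix $2^c K + 1$ linearly independent such solutions $p_1(\bld{n}),\ldots,p_{2^c K + 1}(\bld{n})$.

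Following the setup of Lemma~\ref{lem:p(n)}, I would form candidate sequences $p(\bld{n}) = \sum_j \al_j p_j(\bld{n})$ on $W$, together with free values $p(\bld{v})$ for $\bld{v} \in V$, and impose all the boundary equations. Every such $p$ satisfies the interior equations \eqref{eqn:EQ} automatically, since those equations are linear and homogeneous, involve only states in $W$, and hold for each $p_j$; moreover $p$ is absolutely summable, being a finite combination of absolutely convergent sequences together with finitely many values on $V$. Hence any solution of the boundary equations produces a genuine absolutely summable solution of the complete set of equilibrium equations.

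Now I would count. The unknowns are the $2^c K + 1$ coefficients $\al_j$ and the $|V|$ probabilities on $V$, for a total of $|V| + 2^c K + 1$, while the boundary equations number $|V| + 2^c K$. The essential point is that this count is not tight: exactly as in the proof of Lemma~\ref{lem:p(n)}, the equilibrium equation at state $\bld{n} = \bld{0}$ is implied by all the remaining equilibrium equations, since inserting $p$ into the other equations and interchanging the (absolutely convergent) summations reproduces it. Because the interior equations are satisfied identically, this means the equation at $\bld{0}$ is redundant relative to the other boundary equations, so the boundary system has rank at most $|V| + 2^c K - 1$. By rank--nullity its solution space then has dimension at least $(|V| + 2^c K + 1) - (|V| + 2^c K - 1) = 2$.

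Finally I would transport this back. The map sending a boundary solution $(\al_1,\ldots,\al_{2^c K+1},\{p(\bld{v})\})$ to the sequence $p$ is linear and injective, since $p \equiv 0$ forces $p(\bld{v}) = 0$ on $V$ and $\sum_j \al_j p_j \equiv 0$ on $W$, whence all $\al_j = 0$ by linear independence. Its image is therefore a space of absolutely summable solutions of the full equilibrium equations of dimension at least two, yielding two linearly independent equilibrium measures and contradicting the uniqueness of the equilibrium distribution. I expect the delicate step to be precisely this redundancy of the equation at $\bld{0}$: it is the single linear dependence among the balance equations that sharpens the bound from $2^c K + 1$ down to $2^c K$, and it relies on the legitimacy of interchanging the order of summation, which is exactly where the absolute convergence of the candidate solutions is needed.
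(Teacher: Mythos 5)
Your proof is correct, but it reaches the bound by a genuinely different mechanism than the paper's. The paper applies Lemma~\ref{lem:p(n)} twice---once to $p_1,\ldots,p_{2^cK}$ and once to $p_2,\ldots,p_{2^cK+1}$---to obtain two representations of the same equilibrium distribution; subtracting them produces a nontrivial vanishing linear combination of all $2^cK+1$ solutions (the coefficient $\al_1\neq 0$ survives), contradicting their assumed linear independence. You instead reopen the counting argument inside the proof of Lemma~\ref{lem:p(n)}: with one extra coefficient the homogeneous boundary system has $|V|+2^cK+1$ unknowns but rank at most $|V|+2^cK-1$, since the balance equation at $\bld{n}=\bld{0}$ is implied by the remaining ones (the interchange-of-summations step, which is indeed exactly where absolute convergence is needed), so rank--nullity yields a null space of dimension at least two, and your injective parametrization converts this into a two-dimensional space of absolutely summable solutions of the \emph{full} set of balance equations. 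One caveat: the final contradiction requires the one-dimensionality of the space of absolutely convergent \emph{signed} invariant measures---i.e.\ Foster's theorem as invoked in the proof of Lemma~\ref{lem:p(n)}---and not merely uniqueness of the equilibrium probability distribution, since a two-dimensional space of signed solutions need not exhibit a second probability vector. Granting that fact (on which the paper also relies), your argument is sound; the paper's version is shorter because it uses Lemma~\ref{lem:p(n)} as a black box, whereas yours is more self-contained and makes explicit that the bound $2^cK$ is precisely the number of boundary equations minus the single linear dependence among them.
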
%
\begin{proof}%
Say we find $2^c K + 1$ linearly independent, absolutely convergent solutions. Let us label these solutions as $p_j(\bld{n}), ~ j = 1,\ldots,2^c K + 1$. According to Lemma~\ref{lem:p(n)} we can express $p(\bld{n})$ as a linear combination of the first $2^c K$ products. We know that at least one of the coefficients $\al_j$ has to be nonzero (since $p(\bld{n}) > 0$). Without loss of generality we assume that $\al_1 \neq 0$. Let us now construct $p(\bld{n})$ with the last $2^c K$ solutions and label the coefficients as $\xi_j$, thus ignoring the first product. As both sets of $2^c K$ linearly independent, absolutely convergent solutions represent the same equilibrium probability, taking the difference yields
\begin{equation}%
\al_1 p_1(\bld{n}) + \sum_{j = 2}^{2^c K}(\al_j - \xi_j)p_j(\bld{n}) - \xi_{2^c K + 1}~p_{2^c K + 1}(\bld{n}) = 0.
\end{equation}%
The solutions $p_j(\bld{n}), ~ j = 1,\ldots,2^c K + 1$ are linearly independent and therefore the only way to construct the zero solution is by settings all coefficients equal to 0. As $\al_1 \neq 0$, we get a contradiction, which implies that the corollary holds.
\end{proof}%

Lemma~\ref{lem:p(n)} reduces the problem of determining the equilibrium distribution to that of finding $2^c K$ linearly independent solutions of \eqref{eqn:EQ}. We next seek solutions satisfying the inner conditions \eqref{eqn:EQ} of the special form
\begin{equation}%
p(\bld{n}) = \be_0^{n_0} \be_1^{n_1} \cdots \be_c^{n_c}. \label{eqn:PF}
\end{equation}%
Clearly, only products that can be normalized, i.e.~for which the sum over all states converges absolutely, are useful. This implies that $|\be_0| < 1$.

Substituting \eqref{eqn:PF} into the equilibrium equations, dividing by $\be_0^{n_0 - K} \be_1^{n_1} \cdots \be_c^{n_c}$ and bringing all terms to one side, gives
\begin{align}%
0 &= \sum_{i = 1}^c \Bigl( (1 - n_i) \Bigl( B_i(\be_0) + D_i(\be_0)\be_i \Bigr) + n_i \Bigl( A_i(\be_0)\frac{1}{\be_i} + C_i(\be_0) \Bigr) \Bigr) \notag \\
&\quad - \be_0^K \sum_{i = 1}^{c} \bigl( (1 - n_i) (A_i(1) + B_i(1)) + n_i (C_i(1) + D_i(1)) \bigr). \label{eqn:EQ_GF}
\end{align}%
Since the above equation holds for all states $\bld{n}$ with $n_0 \ge K$ and the right-hand side of this equation must be zero, we require that the coefficients of $n_i$ vanish, which amounts to
\begin{equation}%
0 = A_i(\be_0)\frac{1}{\be_i} - B_i(\be_0) + C_i(\be_0) - D_i(\be_0)\be_i + \be_0^K(A_i(1) + B_i(1) - C_i(1) - D_i(1)). \label{eqn:coeff_ni}
\end{equation}%
This is a quadratic equation in $\be_i$, with solutions
\begin{equation}%
\be_i = \frac{F_i(\be_0)}{2D_i(\be_0)} + x_i \frac{\sqrt{ F_i(\be_0)^2 + 4A_i(\be_0)D_i(\be_0) }}{2D_i(\be_0)}, \quad i = 1,\ldots,c, \label{eqn:beta_i}
\end{equation}%
where $F_i(\be_0) = \be_0^K(A_i(1) + B_i(1) - C_i(1) - D_i(1)) - B_i(\be_0) + C_i(\be_0)$ and $x_i^2 = 1$, so that $x_i$ is either $-1$ or 1. As we cannot divide by zero, we need $D_i(\be_0) > 0$, which is always valid if $\be_0 \in (0,1)$ by Assumption~\ref{as:rate}(ii). If $\be_0 \in (0,1)$, then $F_i(\be_0)^2 + 4A_i(\be_0)D_i(\be_0) > 0$ and \eqref{eqn:beta_i} depends on $x_i$. Note that $\be_0$ is still to be determined.

We can now substitute \eqref{eqn:beta_i} into \eqref{eqn:EQ_GF} to find that
\begin{align}%
0 = \sum_{i = 1}^c &\Bigl( x_i \sqrt{F_i(\be_0)^2 + 4A_i(\be_0)D_i(\be_0)} + (B_i(\be_0) + C_i(\be_0)) \notag \\
&\quad - \be_0^K (A_i(1) + B_i(1) + C_i(1) + D_i(1)) \Bigr). \label{eqn:solve_beta_0}
\end{align}%
This leads to $2^c$ equations for $\be_0$ due to all possible combinations of $x_i$'s.
\begin{lemma}\label{lem:numberOfRootsK=1}%
For each combination of $x_i$'s, and $K = 1$, equation \eqref{eqn:solve_beta_0} has at least one root $\be_0 \in (0,1)$.
\end{lemma}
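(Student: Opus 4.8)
The plan is to treat the right-hand side of \eqref{eqn:solve_beta_0} as a real function of $\be_0$,
\[
g(\be_0) := \sum_{i = 1}^c \Bigl( x_i \sqrt{F_i(\be_0)^2 + 4A_i(\be_0)D_i(\be_0)} + B_i(\be_0) + C_i(\be_0) - \be_0^K\bigl( A_i(1) + B_i(1) + C_i(1) + D_i(1) \bigr) \Bigr),
\]
and to exhibit a root in $(0,1)$ via the intermediate value theorem. First I would check that $g$ is continuous on $[0,1]$: each of $A_i,B_i,C_i,D_i$ is a power series in $\be_0$ with nonnegative coefficients that converges at $\be_0 = 1$ by Assumption~\ref{as:rate}(i), hence is continuous on $[0,1]$ by Abel's theorem; and the radicand is nonnegative throughout $[0,1]$ (strictly positive on $(0,1)$ as already noted, equal to $(A_i(1)+D_i(1))^2$ at $\be_0=1$, and vanishing at $\be_0=0$), so the square-root term is continuous as well.

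Next I would evaluate the endpoints, specializing to $K = 1$. At $\be_0 = 0$ we have $F_i(0) = C_i(0) - B_i(0) = c_{1,i} - b_{1,i} = 0$ by Assumption~\ref{as:rate}(v), and $A_i(0)D_i(0) = a_{1,i}d_{1,i} = 0$ by Assumption~\ref{as:rate}(iv), so the radical vanishes and
\[
g(0) = \sum_{i = 1}^c \bigl( B_i(0) + C_i(0) \bigr) = 2\sum_{i = 1}^c b_{1,i} > 0.
\]
At $\be_0 = 1$ a direct computation gives $F_i(1) = A_i(1) - D_i(1)$, whence $\sqrt{F_i(1)^2 + 4A_i(1)D_i(1)} = A_i(1) + D_i(1)$, and therefore
\[
g(1) = \sum_{i = 1}^c (x_i - 1)\bigl( A_i(1) + D_i(1) \bigr) \le 0,
\]
with equality exactly when every $x_i = 1$.

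If at least one $x_i = -1$, then $g(1) < 0 < g(0)$ and the intermediate value theorem yields a root in $(0,1)$ at once. The case I expect to be the main obstacle is the one with all $x_i = 1$: there $g(1) = 0$, so $\be_0 = 1$ is a root but possibly not an interior one, and I must show that $g$ becomes negative just to the left of $1$. The plan is to establish $g'(1) > 0$. Writing $h_i$ for the $i$-th summand of $g$, differentiating, and evaluating at $\be_0 = 1$ with $K = 1$ (using $F_i(1) = A_i(1) - D_i(1)$, $F_i'(1) = A_i(1) + B_i(1) - C_i(1) - D_i(1) - B_i'(1) + C_i'(1)$, and $\sqrt{F_i(1)^2 + 4A_i(1)D_i(1)} = A_i(1) + D_i(1)$), I anticipate that after collecting over the common denominator $A_i(1) + D_i(1)$ and substantial cancellation one is left with
\[
h_i'(1) = \frac{2}{A_i(1) + D_i(1)}\Bigl( D_i(1)\bigl( A_i'(1) - A_i(1) + B_i'(1) - B_i(1) \bigr) + A_i(1)\bigl( C_i'(1) - C_i(1) + D_i'(1) - D_i(1) \bigr) \Bigr).
\]

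Summing over $i$, this is precisely twice the right-hand side of the ergodicity condition \eqref{eqn:ergo_necessary} with $K = 1$, so $g'(1) = \sum_{i=1}^c h_i'(1) > 0$ under the standing assumption that \eqref{eqn:ergo_necessary} holds. Hence $g(\be_0) < 0$ for $\be_0$ slightly below $1$, and since $g(0) > 0$ the intermediate value theorem again produces a root in $(0,1)$, settling both cases. The bulk of the effort is the cancellation in this derivative computation; once it collapses to the ergodicity expression the conclusion is immediate.
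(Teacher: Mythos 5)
Your proposal is correct and follows essentially the same route as the paper's proof: evaluate the summand functions at $\be_0=0$ and $\be_0=1$ using Assumptions~\ref{as:rate}(iv)--(v), apply the intermediate value theorem, and handle the all-$x_i=1$ case via the sign of the derivative at $1$, which reduces to the ergodicity condition \eqref{eqn:ergo_necessary}. Your derivative computation (with the factor $2$) is in fact slightly more precise than the paper's statement, but the argument is the same.
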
%
\begin{proof}%
Let $h_i(\be_0)$ denote the function inside the summation in \eqref{eqn:solve_beta_0}. Assumptions~\ref{as:rate}(iv) and (v) imply that $F_i(\be_0)^2 + 4A_i(\be_0)D_i(\be_0)$ is equal to zero for $\be_0 = 0$, so that
\begin{equation}%
h_i(0) = B_i(0) + C_i(0), \quad h_i(1) = (x_i - 1)(A_i(1) + D_i(1)).
\end{equation}%
Thus,
\begin{equation}%
\sum_{i = 1}^c h_i(0) > 0, \quad \sum_{i = 1}^c h_i(1) \le 0.
\end{equation}%
Note that $\sum_{i = 1}^c h_i(1) = 0$ only occurs for $x_i = 1, ~ i = 1,\ldots,c$, in which case the derivative $\sum_{i = 1}^c h_i'(1)$ is equal to the right-hand side of \eqref{eqn:ergo_necessary} and is positive. This proves that there is at least one zero in $(0,1)$. For all other combinations of $x_i$'s, we conclude there is at least one zero as well, since $\sum_{i = 1}^c h_i(\be_0)$ is continuous.
\end{proof}%

By combining Lemma~\ref{lem:numberOfRootsK=1} and \eqref{eqn:beta_i} we then find the product $\be_0^{n_0} \cdots \be_c^{n_c}$ for each feasible combination of $x_i$'s, so we obtain at least $2^c$ products. Let us select one root $\be_0$ obtained from each equation and label these products as $\be_{0,j}^{n_0} \cdots \be_{c,j}^{n_c}, ~ j = 1,\ldots,2^c$. In this way we have characterized the product forms satisfying the equilibrium equations \eqref{eqn:EQ}.
\begin{lemma}\label{lem:linearlyIndependentProductsK=1}%
The products $\be_{0,j}^{n_0} \cdots \be_{c,j}^{n_c}, ~ j = 1,\ldots,2^c$ are linearly independent on $\bld{n} \in W$.
\end{lemma}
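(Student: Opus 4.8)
The plan is to prove the vanishing of every coefficient in a putative linear relation by first separating the infinite coordinate $n_0$ from the finite coordinates $(n_1,\ldots,n_c)$, and then exploiting the tensor (product) structure of the finite part. So suppose $\sum_{j=1}^{2^c} \al_j \be_{0,j}^{n_0}\be_{1,j}^{n_1}\cdots\be_{c,j}^{n_c}=0$ for all $\bld{n}\in W$; I must deduce $\al_j=0$ for all $j$. First I would group the $2^c$ products according to the value of their first coordinate. Let $\ga_1,\ldots,\ga_r$ denote the distinct values occurring among $\be_{0,1},\ldots,\be_{0,2^c}$, and put $J_\ell=\{j:\be_{0,j}=\ga_\ell\}$. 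Fixing $(n_1,\ldots,n_c)\in\{0,1\}^c$ and viewing the relation as a function of $n_0\in\Nat_0$, it reads $\sum_{\ell=1}^r \ga_\ell^{\,n_0}\bigl(\sum_{j\in J_\ell}\al_j\be_{1,j}^{n_1}\cdots\be_{c,j}^{n_c}\bigr)=0$. Since the $\ga_\ell$ are distinct (and lie in $(0,1)$), the geometric sequences $(\ga_\ell^{\,n_0})_{n_0\ge 0}$ are linearly independent by a Vandermonde argument, so every inner bracket vanishes:
\[
\sum_{j\in J_\ell}\al_j\,\be_{1,j}^{n_1}\cdots\be_{c,j}^{n_c}=0,\qquad (n_1,\ldots,n_c)\in\{0,1\}^c,
\]
for each $\ell$. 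This reduces the claim to proving, within each group $J_\ell$, linear independence of the finite-dimensional products on the hypercube $\{0,1\}^c$.

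Next I would use the sign structure encoded in \eqref{eqn:beta_i}. Within $J_\ell$ every product shares the same $\be_0=\ga_\ell$, so each $\be_{i,j}$ equals one of the two roots $\be_i^{-},\be_i^{+}$ of the quadratic \eqref{eqn:coeff_ni} at $\be_0=\ga_\ell$, the choice being dictated by the sign $x_i^{(j)}\in\{-1,+1\}$; these two roots are distinct because the discriminant $F_i(\ga_\ell)^2+4A_i(\ga_\ell)D_i(\ga_\ell)$ is strictly positive for $\ga_\ell\in(0,1)$. Identifying a function on $\{0,1\}^c$ with a vector in $(\Complex^2)^{\otimes c}\cong\Complex^{2^c}$, the product indexed by $j$ is exactly the rank-one tensor $\bigotimes_{i=1}^c (1,\be_{i,j})$. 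Since $\be_i^{-}\ne\be_i^{+}$, the vectors $(1,\be_i^{-})$ and $(1,\be_i^{+})$ form a basis of $\Complex^2$ for each $i$, and hence the whole family $\{\bigotimes_{i=1}^c (1,\be_i^{x_i}):(x_1,\ldots,x_c)\in\{-1,+1\}^c\}$ is a basis of $\Complex^{2^c}$. The products in $J_\ell$ are precisely the members of this family corresponding to the distinct sign vectors $(x_1^{(j)},\ldots,x_c^{(j)})$ with $j\in J_\ell$, hence a subfamily of a basis, and therefore linearly independent. This forces $\al_j=0$ for all $j\in J_\ell$, and ranging over $\ell$ gives $\al_j=0$ for all $j$.

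The main obstacle, and the reason a shortcut fails, is that linear independence does \emph{not} follow merely from the parameter vectors $(\be_{0,j},\ldots,\be_{c,j})$ being pairwise distinct: on the restricted domain $n_i\in\{0,1\}$ distinct rank-one tensors can be linearly dependent. For instance, three products that share a common $\be_0$ and a common $\be_1$ but differ only in a single two-valued coordinate are dependent, being three vectors living in a two-dimensional space. The argument must therefore use the sharper structural fact that, at a fixed $\be_0$, each finite coordinate takes only the two values $\be_i^{\pm}$ indexed by its sign $x_i$, which is exactly what places the relevant rank-one tensors inside a tensor-product basis. The remaining ingredients — distinctness of the two roots via the positive discriminant together with $\ga_\ell\in(0,1)$, and the Vandermonde step in $n_0$ — are routine; the conceptual content is the tensor-basis observation.
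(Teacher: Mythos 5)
Your proof is correct and follows essentially the same route as the paper: both first apply a Vandermonde argument in $n_0$ to isolate the groups sharing a common $\be_{0,j}$, and then exploit that within each group the $i$-th finite coordinate takes only the two distinct values determined by $x_i=\pm 1$ (distinct because the discriminant is positive for $\be_0\in(0,1)$). The paper carries out this second stage by iterating the two-point Vandermonde step coordinate by coordinate over $n_1,\ldots,n_c$, which is exactly your tensor-product-basis observation unrolled one factor at a time.
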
%
\begin{proof}%
The proof is based on a property of the Vandermonde matrix. Let the numbers $y_1,\ldots,y_M$ satisfy $y_i \neq y_j, ~ i \neq j$. Then
\begin{equation}
\sum_{j = 1}^{M} \phi_j y_j^n = 0, ~ n = 0,1,\ldots,M-1 \iff \phi_j = 0, ~ j = 1,\ldots,M. \label{eqn:VandermondeProperty}
\end{equation}
To prove this property, note that the left-hand side of the if and only if relation in \eqref{eqn:VandermondeProperty} can be written as $Y\Phi = 0$ with $\Phi = \begin{pmatrix} \phi_1 & \cdots & \phi_M \end{pmatrix}^T$ and
\begin{equation}%
Y = \begin{pmatrix}%
1 & 1 & \cdots & 1 \\
y_1 & y_2 & \cdots & y_M \\
y_1^2 & y_2^2 & \cdots & y_M^2 \\
\vdots & \vdots & & \vdots \\
y_1^{M-1} & y_2^{M-1} & \cdots & y_M^{M-1}
\end{pmatrix}.%
\end{equation}%
Since $Y$ is a Vandermonde matrix, its determinant is given by $\prod_{1 \le i < j \le M} (y_i - y_j)$, which is nonzero, since $y_i \neq y_j$, if $i \neq j$. Thus property \eqref{eqn:VandermondeProperty} holds.

We want to show that the following property holds to prove linear independence:
\begin{equation}%
\sum_{j = 1}^{2^c} \al_j \be_{0,j}^{n_0} \be_{1,j}^{n_1} \cdots \be_{c,j}^{n_c} = 0, ~ \bld{n} \in W \iff \al_j = 0, ~ j = 1,\ldots,2^c. \label{eqn:linearIndependenceProperty}
\end{equation}%
Suppose the number of different $\be_{0,j}$'s is $M$. For any (fixed) choice of $n_i, ~ i = 1,\ldots,c$, we then rewrite the left-hand side of \eqref{eqn:linearIndependenceProperty} as
\begin{equation}%
\sum_{i} \be_{0,i}^{n_0} \sum_{\be_{0,j} = \be_{0,i}} \al_j \be_{1,j}^{n_1} \cdots \be_{c,j}^{n_c} = 0, \quad n_0 = 0,1,\ldots,M-1.
\end{equation}%
Hence, by \eqref{eqn:VandermondeProperty},
\begin{equation}%
\sum_{\be_{0,j} = \be_{0,i}} \al_j \be_{1,j}^{n_1} \cdots \be_{c,j}^{n_c} = 0, \quad n_i \in \{0,1\}, ~ i = 1,\ldots,c.
\end{equation}%
For $n_1 \in \{0,1\}$ and any choice of $n_i, ~ i = 2,\ldots,c$, we rewrite the above equation as
\begin{align}%
& \sum_{\substack{\mathllap{\be_{0,j}} = \mathrlap{\be_{0,i}} \\ \mathllap{x_1} = \mathrlap{-1}}} \al_j \be_{2,j}^{n_2} \cdots \be_{c,j}^{n_c}  \Bigl( \frac{F_1(\be_{0,i})}{2 D_1(\be_{0,i})} - \frac{\sqrt{F_1(\be_{0,i})^2 + 4A_1(\be_{0,i})D_1(\be_{0,i})}}{2 D_1(\be_{0,i})} \Bigr)^{n_1} \notag \\
+ &\sum_{\substack{\mathllap{\be_{0,j}} = \mathrlap{\be_{0,i}} \\ \mathllap{x_1} = \mathrlap{1}}} \al_j \be_{2,j}^{n_2} \cdots \be_{c,j}^{n_c} \Bigl( \frac{F_1(\be_{0,i})}{2 D_1(\be_{0,i})} + \frac{\sqrt{F_1(\be_{0,i})^2+4A_1(\be_{0,i})D_1(\be_{0,i})}}{2 D_1(\be_{0,i})} \Bigr)^{n_1} = 0. \label{eqn:VandermondeStep}
\end{align}%
Therefore, again by \eqref{eqn:VandermondeProperty}, for $n_i \in \{0,1\}, ~ i = 2,\ldots,c$,
\begin{equation}%
\sum_{\substack{\mathllap{\be_{0,j}} = \mathrlap{\be_{0,i}} \\ \mathllap{x_1} = \mathrlap{-1}}} \al_j \be_{2,j}^{n_2} \cdots \be_{c,j}^{n_c} = 0, \quad \textup{and} \quad \sum_{\substack{\mathllap{\be_{0,j}} = \mathrlap{\be_{0,i}} \\ \mathllap{x_1} = \mathrlap{1}}} \al_j \be_{2,j}^{n_2} \cdots \be_{c,j}^{n_c} = 0. \label{eqn:VandermondeStep2}
\end{equation}%
Repeating the steps in \eqref{eqn:VandermondeStep} and \eqref{eqn:VandermondeStep2} for $n_2,n_3,\ldots,n_c$, we finally get $\al_j = 0$ for all $j$ in accordance with \eqref{eqn:linearIndependenceProperty} and thus we conclude that the products are linearly independent.
\end{proof}%

Suppose we could find an additional root $\be_0$ in one of the equations in \eqref{eqn:solve_beta_0}, leading to $2^c + 1$ roots. Then we could use the same strategy as for the proof of Lemma~\ref{lem:linearlyIndependentProductsK=1} to prove that these $2^c + 1$ products are linearly independent. From Corollary~\ref{cor:dimensionSolutions} we deduce that finding an additional root is not possible and we formulate the following corollary.
\begin{corollary}\label{cor:numberOfSolutionsSolveBeta0}%
For each combination of $x_i$'s, and $K = 1$, equation~\eqref{eqn:solve_beta_0} has exactly one root $\be_0 \in (0,1)$, leading to a total of $2^c$ roots.
\end{corollary}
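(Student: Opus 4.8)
The plan is to prove this by contradiction, combining the lower bound on the number of admissible roots with the upper bound on the dimension of the solution space. Lemma~\ref{lem:numberOfRootsK=1} already guarantees that each of the $2^c$ equations packaged in \eqref{eqn:solve_beta_0}, one per sign combination $(x_1,\ldots,x_c)$, has \emph{at least} one root $\be_0 \in (0,1)$, so at least $2^c$ admissible roots exist. It therefore suffices to rule out the possibility that any single one of these equations carries two or more roots in $(0,1)$.

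Suppose, for contradiction, that for some fixed combination of $x_i$'s the corresponding equation in \eqref{eqn:solve_beta_0} has at least two roots in $(0,1)$. Picking one root from each of the remaining $2^c - 1$ equations together with both of these, I would obtain at least $2^c + 1$ roots $\be_0 \in (0,1)$, and through \eqref{eqn:beta_i} a corresponding family of at least $2^c + 1$ product-form solutions of \eqref{eqn:EQ}. Each is absolutely convergent, since $|\be_0| < 1$ and the exponents $n_i$ range only over $\{0,1\}$, so convergence is governed by the geometric factor in $n_0$.

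The crux is then to show that these $2^c + 1$ products are linearly independent, which I would establish by rerunning the Vandermonde argument from the proof of Lemma~\ref{lem:linearlyIndependentProductsK=1} essentially verbatim. That argument never uses the precise count $2^c$; it only needs that products with a common value of $\be_0$ be separated by their $\be_i$-coordinates, and that distinct values of $\be_0$ be separated via the Vandermonde property \eqref{eqn:VandermondeProperty} in the $n_0$-coordinate. Two products sharing a sign combination but arising from \emph{different} roots $\be_0$ lie in different $\be_0$-groups and are separated by the outer Vandermonde step; two products sharing a common $\be_0$ but differing in some sign $x_i$ have genuinely distinct $\be_i$, because the discriminant $F_i(\be_0)^2 + 4A_i(\be_0)D_i(\be_0) > 0$ strictly on $(0,1)$ (as recorded just after \eqref{eqn:beta_i}), and are separated by the inner step. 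Hence the enlarged family remains linearly independent.

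Finally, $2^c + 1$ linearly independent, absolutely convergent solutions of \eqref{eqn:EQ} contradict Corollary~\ref{cor:dimensionSolutions}, which caps that dimension at $2^c K = 2^c$ for $K = 1$. This forces each equation in \eqref{eqn:solve_beta_0} to have at most one root in $(0,1)$; combined with the lower bound from Lemma~\ref{lem:numberOfRootsK=1}, each has exactly one, for a total of exactly $2^c$. I expect the only delicate point to be the bookkeeping in the third paragraph, namely confirming that the Vandermonde separation still applies when the extra root happens to coincide numerically with a root coming from a different sign combination; the strict positivity of the discriminant on $(0,1)$ is precisely what resolves this, by keeping the $\be_i$-coordinates of opposite sign choices apart.
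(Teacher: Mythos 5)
Your proposal is correct and follows essentially the same route as the paper: the paper also argues that an additional root in $(0,1)$ would, via the Vandermonde argument of Lemma~\ref{lem:linearlyIndependentProductsK=1}, produce $2^c + 1$ linearly independent absolutely convergent solutions of \eqref{eqn:EQ}, contradicting Corollary~\ref{cor:dimensionSolutions}. Your extra remark that the strict positivity of the discriminant on $(0,1)$ keeps products with coinciding $\be_0$ but different sign combinations distinct is a detail the paper leaves implicit, but it is the same argument.
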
%

The following theorem states that the equilibrium distribution $p(\bld{n}), ~ \bld{n} \in W$ for the case $K = 1$ can be expressed as a linear combination of product forms.
\begin{theorem}\label{th:distK=1}%
For the multi-dimensional Markov processes with $K = 1$, the equilibrium distribution takes the form
\begin{equation}%
p(\bld{n}) = \sum_{j=1}^{2^c} \al_j \be_{0,j}^{n_0} \be_{1,j}^{n_1} \cdots \be_{c,j}^{n_c}, \quad \bld{n} \in W,
\end{equation}%
where $\be_{0,j}$ are the roots of \eqref{eqn:solve_beta_0} in the interval $(0,1)$ for each combination of $x_i$'s and $\be_{i,j}, ~ i = 1,\ldots,c$ are the corresponding constants found from \eqref{eqn:beta_i}. The real-valued constants $\al_j$ are determined from the boundary equations and the normalization condition.
\end{theorem}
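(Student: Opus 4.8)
The plan is to assemble the machinery already in place rather than to prove anything substantially new. The key bookkeeping observation is that for $K = 1$ the count $2^c K$ appearing in Lemma~\ref{lem:p(n)} equals $2^c$, so it suffices to exhibit exactly $2^c$ linearly independent, absolutely convergent solutions of the inner equations \eqref{eqn:EQ}, each of the product form \eqref{eqn:PF}, and then to invoke Lemma~\ref{lem:p(n)}.

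First I would collect the candidate products. By Corollary~\ref{cor:numberOfSolutionsSolveBeta0}, each of the $2^c$ sign patterns $x_i \in \{-1,1\}$ yields exactly one root $\be_0 \in (0,1)$ of \eqref{eqn:solve_beta_0}; labelling these roots $\be_{0,j}$ and computing the companion constants $\be_{i,j}$ from \eqref{eqn:beta_i} produces $2^c$ products $\be_{0,j}^{n_0}\be_{1,j}^{n_1}\cdots\be_{c,j}^{n_c}$. By the very way \eqref{eqn:beta_i} and \eqref{eqn:solve_beta_0} were derived (as the conditions making the coefficient of each $n_i$ and the remaining constant term in \eqref{eqn:EQ_GF} vanish), every such product satisfies the inner equations \eqref{eqn:EQ}. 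Next I would check absolute convergence: since $\be_{0,j} \in (0,1)$ we have $|\be_{0,j}| < 1$, so summing the geometric factor $\be_{0,j}^{n_0}$ over $n_0 \in \Nat_0$ converges, while the remaining coordinates range only over $n_i \in \{0,1\}$; hence $\sum_{\bld{n}} |\be_{0,j}^{n_0} \cdots \be_{c,j}^{n_c}| < \infty$. Linear independence of these $2^c$ products on $\bld{n} \in W$ is exactly the content of Lemma~\ref{lem:linearlyIndependentProductsK=1}.

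Having produced $2^c = 2^c K$ linearly independent, absolutely convergent solutions of \eqref{eqn:EQ}, I would apply Lemma~\ref{lem:p(n)} to conclude that the equilibrium distribution is the asserted linear combination, with the constants $\al_j$ uniquely fixed by the boundary equations and the normalization condition. The remaining point is that the $\al_j$ are real: because each $\be_{0,j}$ lies in $(0,1)$ and, as noted after \eqref{eqn:beta_i}, there $F_i(\be_{0,j})^2 + 4A_i(\be_{0,j})D_i(\be_{0,j}) > 0$, every $\be_{i,j}$ from \eqref{eqn:beta_i} is real. Thus all products are real-valued, the homogeneous boundary system solved in the proof of Lemma~\ref{lem:p(n)} has real coefficients, and since the equilibrium distribution is real and is represented uniquely in terms of these independent real products, the $\al_j$ are forced to be real.

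I expect the argument to be essentially bookkeeping, the substantive work being carried by the earlier lemmas. The one point I would be most careful about is that the count is exactly right for Lemma~\ref{lem:p(n)}: neither more nor fewer than $2^c$ independent convergent solutions are permitted, and it is the combination of Corollary~\ref{cor:numberOfSolutionsSolveBeta0} (exactly one admissible root per sign pattern) against the upper bound of Corollary~\ref{cor:dimensionSolutions} that pins this number down. In particular, I would not need to worry that some $\be_{0,j}$ coincide across different sign patterns, since the Vandermonde argument of Lemma~\ref{lem:linearlyIndependentProductsK=1} establishes independence using both the values $\be_{0,j}$ and the sign structure, so citing that lemma settles the matter without extra work.
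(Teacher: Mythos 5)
Your proposal is correct and follows essentially the same route as the paper: collect the $2^c$ products from Corollary~\ref{cor:numberOfSolutionsSolveBeta0} and \eqref{eqn:beta_i}, note absolute convergence since $\be_{0,j}\in(0,1)$, invoke Lemma~\ref{lem:linearlyIndependentProductsK=1} for independence, and conclude via Lemma~\ref{lem:p(n)}. Your added justification that the $\al_j$ are real (via positivity of the discriminant, hence real products and a real boundary system) is a small but welcome detail the paper's proof leaves implicit.
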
%
\begin{proof}%
Corollary~\ref{cor:numberOfSolutionsSolveBeta0} says that we find exactly $2^c$ solutions $\be_0$. Using \eqref{eqn:beta_i} we find $2^c$ products $\be_{0,j}^{n_0} \cdots \be_{c,j}^{n_c}, ~ j = 1,\ldots,2^c$. These products are absolutely convergent, as $|\be_0| < 1$, and linearly independent, according to Lemma~\ref{lem:linearlyIndependentProductsK=1}. Then, Lemma~\ref{lem:p(n)} shows that the $2^c$ linearly independent and absolutely convergent solutions can be used to express the equilibrium distribution as a linear combination of these solutions.
\end{proof}%
\begin{corollary}%
For the case $K = 1$, the maximal root $\be_{0,j}$, which governs the tail behavior of the distribution, is always attained for all $x_i = 1$.
\end{corollary}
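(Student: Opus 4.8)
The plan is to exploit that the sign variable $x_i$ enters the equations \eqref{eqn:solve_beta_0} only through a nonnegative square-root term, so that the all-ones choice pointwise dominates every other one. Writing $h_i(\be_0)$ as in the proof of Lemma~\ref{lem:numberOfRootsK=1} and setting $H_{\mathbf x}(\be_0) := \sum_{i=1}^c h_i(\be_0)$ for the combination $\mathbf x = (x_1,\ldots,x_c)$, the only $x_i$-dependence is the summand $x_i\sqrt{F_i(\be_0)^2 + 4A_i(\be_0)D_i(\be_0)}$. On $(0,1)$ the radicand is strictly positive (as noted just after \eqref{eqn:beta_i}), so flipping any $x_i$ from $-1$ to $+1$ strictly increases $H_{\mathbf x}$. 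Hence, with $\mathbf 1 = (1,\ldots,1)$, we have $H_{\mathbf x}(\be_0) < H_{\mathbf 1}(\be_0)$ for all $\be_0 \in (0,1)$ whenever $\mathbf x \neq \mathbf 1$.

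Next I would compare the roots. Let $\be_0^\star \in (0,1)$ be the unique root of $H_{\mathbf 1}$ provided by Corollary~\ref{cor:numberOfSolutionsSolveBeta0}. Strict domination gives $H_{\mathbf x}(\be_0^\star) < H_{\mathbf 1}(\be_0^\star) = 0$. Because the radicand vanishes at $\be_0 = 0$ by Assumptions~\ref{as:rate}(iv),(v), the value $H_{\mathbf x}(0) = \sum_{i=1}^c (B_i(0) + C_i(0)) > 0$ does not depend on $\mathbf x$, exactly as computed for $h_i(0)$ in Lemma~\ref{lem:numberOfRootsK=1}. By the intermediate value theorem $H_{\mathbf x}$ has a root in $(0,\be_0^\star)$, and since Corollary~\ref{cor:numberOfSolutionsSolveBeta0} guarantees exactly one root of $H_{\mathbf x}$ in $(0,1)$, that root must be the one just located; in particular it is strictly smaller than $\be_0^\star$.

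As this holds for every $\mathbf x \neq \mathbf 1$, the largest of the $2^c$ roots is $\be_0^\star$, attained for $x_i = 1$, $i = 1,\ldots,c$. Since all roots lie in $(0,1)$ and are positive reals, for fixed $n_1,\ldots,n_c$ the product with the largest base dominates $p(\bld{n})$ as $n_0 \to \infty$, so this root governs the tail, which is the assertion of the corollary.

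I do not expect a serious obstacle: the whole argument rests on the sign of the square-root term together with the root count already established in Corollary~\ref{cor:numberOfSolutionsSolveBeta0}. The one point needing a little care is that both $H_{\mathbf x}(0) > 0$ and the strict positivity of the radicand on $(0,1)$ hold uniformly in $\mathbf x$; both follow at once from the computations recorded in the proof of Lemma~\ref{lem:numberOfRootsK=1} and the discussion following \eqref{eqn:beta_i}.
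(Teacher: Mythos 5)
Your proposal is correct and follows essentially the same route as the paper: evaluate the sum $\sum_i h_i$ at $0$ (positive, independent of the sign pattern) and at the root $\hat{\be}_0$ for the all-ones pattern (strictly negative by pointwise domination of the square-root terms), then invoke the intermediate value theorem together with the uniqueness of the root from Corollary~\ref{cor:numberOfSolutionsSolveBeta0} to place every other root in $(0,\hat{\be}_0)$. The only addition beyond the paper's argument is your explicit closing remark on why the largest base governs the tail, which is harmless.
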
%
\begin{proof}%
This proof uses the notation of the proof of Lemma~\ref{lem:numberOfRootsK=1}. Let $\hat{h}_i(\be_0), ~ i = 1,\ldots,c$ be the functions with all $x_i = 1$. Let $\hat{\be}_0$ be the root of $\sum_{i = 1}^c \hat{h}_i(\be_0) = 0$. Then, for $x_1,\ldots,x_c$ with at least one $x_i = -1$, we have that $\sum_{i = 1}^{c} h_i(0) > 0$ and $\sum_{i = 1}^c h_i(\hat{\be}_0) < \sum_{i = 1}^c \hat{h}_i(\hat{\be}_0) = 0$. Hence, together with Corollary~\ref{cor:numberOfSolutionsSolveBeta0}, we conclude that the root of $\sum_{i = 1}^c h_i(\be_0) = 0$ is in $(0,\hat{\be}_0)$.
\end{proof}%

Let us show how to obtain the equilibrium distribution for an example.
\begin{example}%
Consider Example~\ref{ex:running} with $K = 1$ and 2 heterogeneous servers. There are four different combinations of $x_1$ and $x_2$. For each combination we can solve \eqref{eqn:solve_beta_0} to obtain one root $\be_0 \in (0,1)$. This leads to four roots $\be_0$. For each root and corresponding combination of $x_1$ and $x_2$ we obtain $\be_1$ and $\be_2$ from \eqref{eqn:beta_i} to finally find four product forms. The coefficients of the linear combination $\al_j$ can be found from the boundary equations and normalization condition. In doing so, we have uniquely determined the equilibrium distribution $p(\bld{n})$ as shown in Theorem~\ref{th:distK=1}.
\end{example}%

Processes with larger jumps in the positive $n_0$-direction ($K \ge 2$) are harder to study than their counterparts $K = 1$. The number of roots of \eqref{eqn:solve_beta_0} may not be the same for each combination of $x_i$'s as is the case for the Markov processes that are skip-free in the positive $n_0$-direction. The proposed method might still work for processes with $K \ge 2$, which is supported by numerical experiments (not shown here). However, proving that there are $2^c K$ roots is much more involved. We will show this in the next section, where we study \textit{symmetric} Markov processes with $K \ge 1$.

%%%%%%%%%%%%%%%%%%%%%%%%%%%%%%%%%%%%%%%%%%%%%%%%%%%%%%%
%%%%%%%%%%%%%%%%%%%%%%%%%%%%%%%%%%%%%%%%%%%%%%%%%%%%%%%
%%%%%%%%%%%%%%%%%%%%% NEW SECTION %%%%%%%%%%%%%%%%%%%%%
%%%%%%%%%%%%%%%%%%%%%%%%%%%%%%%%%%%%%%%%%%%%%%%%%%%%%%%
%%%%%%%%%%%%%%%%%%%%%%%%%%%%%%%%%%%%%%%%%%%%%%%%%%%%%%%

\section{Symmetric processes}%
\label{sec:symmetric_processes}%

Remember that we refer by symmetric processes to the processes described in Section~\ref{subsec:class_MP} with the additional assumption that the rates in the $(n_0,n_i)$-planes are identical. The queueing systems mentioned in Section~\ref{subsec:examples} give rise to symmetric processes when the servers are identical (homogeneous). We exploit this symmetry to prove that \eqref{eqn:solve_beta_0} has $2^c K$ roots with $|\be_0| < 1$. For symmetric processes we omit the indices $i$ of the rates $a_{k,i},\ldots,d_{k,i}$.

With $\eta_c := -\frac{1}{c} \sum_{i = 1}^c x_i \in [-1,1]$, equation \eqref{eqn:solve_beta_0} simplifies to
\begin{equation}%
\eta_c\sqrt{F(\be_0)^2+4A(\be_0)D(\be_0)} = B(\be_0) + C(\be_0) - \be_0^K(A(1) + B(1) + C(1) + D(1)). \label{eqn:solve_beta_0_symm}
\end{equation}%
Note that if there exists a root $\be_0$ of \eqref{eqn:solve_beta_0_symm} with $|\be_{0}| < 1$ such that
\begin{equation}%
0 = F(\be_{0})^2 + 4A(\be_{0})D(\be_{0}) \label{eqn:overdetermined},
\end{equation}%
then this root $\be_{0}$ is a solution of \eqref{eqn:solve_beta_0_symm} for all combinations of $x_i$'s. This is not desired, because if \eqref{eqn:overdetermined} holds for some $\be_0$, the corresponding $\be_{i}, ~ i = 1,\ldots,c$, found from \eqref{eqn:beta_i} do not depend on $x_i$ and thus we obtain \textit{dependent} solutions. We therefore make the following assumption.
\begin{assumption}\label{as:overdetermined}%
For all roots $\be_{0}$ of \eqref{eqn:solve_beta_0_symm} with $|\be_{0}| < 1$, assume that $F(\be_{0})^2 + 4A(\be_{0})D(\be_{0}) \neq 0$.
\end{assumption}%

Note that Assumption~\ref{as:overdetermined} is satisfied for the case $K = 1$, because $\be_{0} \in (0,1)$ and thus the right-hand side of \eqref{eqn:overdetermined} is strictly positive, due to Assumptions~\ref{as:rate}(iv) and (v). Using Assumption~\ref{as:overdetermined} we should mention that the linear independence of the product forms derived in Lemma~\ref{lem:linearlyIndependentProductsK=1} is also applicable for $K \ge 2$, where the number of products is now $2^c K$. In the following example we show that for certain models Assumption~\ref{as:overdetermined} always holds.
\begin{example}%
Consider Example~\ref{ex:running}, but now with $\mu := \mu_{1,i} = \mu_{2,i}$, i.e.~parallel identical servers with Erlang-2 service times. The right-hand side of \eqref{eqn:overdetermined} then equals $4\mu^2\be_0^{2K + 1}$, thus \eqref{eqn:overdetermined} does not hold for feasible values of $\be_0$ as $\be_0 = 0$ is not a solution of \eqref{eqn:solve_beta_0_symm}.
\end{example}%

However, we can construct situations for which Assumption~\ref{as:overdetermined} is violated. We do so in the following example.
\begin{example}\label{ex:violated}%
Consider again Example~\ref{ex:running}, now with 2 parallel identical servers, where batches of size two arrive with rate $\la$ and there are no other arrivals. The ergodicity condition for this system is given by $\la < \frac{\mu_1\mu_2}{\mu_1 + \mu_2}$. Denote a root of \eqref{eqn:overdetermined} by $\be_{0,a}$ and a root for which the right-hand side of \eqref{eqn:solve_beta_0_symm} vanishes by $\be_{0,b}$, so that
\begin{equation}%
\be_{0,a} = -\frac{(\mu_1 - \mu_2)^2}{4\mu_1\mu_2}, \quad \be_{0,b} = -\sqrt{\frac{\la}{\la + \mu_1 + \mu_2}}.
\end{equation}%
For $\la = 1$, $\mu_1 = 6$ and $\mu_2 = 2$, we find that $\be_{0,a} = \be_{0,b} = -1/3$ and the ergodicity condition holds. Consequently, we find $\be_0 = -1/3$ as a root of \eqref{eqn:solve_beta_0_symm} for all combinations of $x_i$'s, and by \eqref{eqn:beta_i} we find that the products corresponding to this root are identical and thus dependent. It is interesting to see what type of products we find from \eqref{eqn:solve_beta_0_symm} for these parameter settings. This is summarized in Table~\ref{tbl:violatedProducts}. We see that there are only six different products. This is problematic as we require $2^c K = 8$ solutions. A possible solution to obtain the required number of independent solutions, is to try and find products of the form $(a n_0 + b)\be_0^{n_0} \be_1^{n_1}\be_2^{n_2}$ with $\be_0 = -1/3$, and to select $a$ and $b$ values such that the inner conditions \eqref{eqn:EQ} are satisfied. We leave this for future research.
\begin{table}%
\centering%
\begin{tabular}{S[table-format=3.2]S[table-format=3.2]|S[table-format=3.2]S[table-format=3.2]S[table-format=3.2]}%
{$x_1$} & {$x_2$} & {$\be_0$} & {$\be_1$} & {$\be_2$} \\
\hline
1 & 1 & 0.74 & 3.77 & 3.77 \\
& & -0.33 & -3 & -3 \\
\hline
-1 & 1 & 0.33 & -1.24 & 7.24 \\
& & -0.33 & -3 & -3 \\
\hline
1 & -1 & 0.33 & 7.24 & -1.24 \\
& & -0.33 & -3 & -3 \\
\hline
-1 & -1 & -0.32 & -2.48 & -2.48 \\
& & 0.26 & -1.28 & -1.28 \\
& & -0.33 & -3 & -3 \\
\end{tabular}%
\caption{Solutions of \eqref{eqn:solve_beta_0_symm} and corresponding $\be_i$'s for Example~\ref{ex:violated}.}%
\label{tbl:violatedProducts}%
\end{table}%
\end{example}%

Recall that in \eqref{eqn:beta_i} we need $D(\be_0) \neq 0$. As the roots are no longer restricted to the interval $(0,1)$, but more generally should lie within the unit circle, it is possible that $D(\be_{0}) = 0$ for some root $\be_{0}$. We therefore make the following assumption.
\begin{assumption}\label{as:Dnot0}%
For all roots $\be_{0}$ of \eqref{eqn:solve_beta_0_symm} with $|\be_{0}| < 1$, assume that $D(\be_{0}) \neq 0$.
\end{assumption}%

We now take squares of both sides of \eqref{eqn:solve_beta_0_symm}, thus eliminating the square root, and we will prove, under the ergodicity condition \eqref{eqn:ergo_symm}, the following result.

\begin{lemma}\label{lem:numberOfRootsK>1}%
Let $\eta_c := -\frac{1}{c} \sum_{i=1}^c x_i$. For each combination of $x_i$'s, the equation
\begin{equation}%
\eta_c^2 \bigl( F(\be_0)^2 + 4A(\be_0)D(\be_0) \bigr) = \bigl( B(\be_0) + C(\be_0) - \be_0^K(A(1) + B(1) + C(1) + D(1)) \bigr)^2, \label{eqn:solve_beta_0_symm_squared}
\end{equation}%
has $2K$ solutions $\be_0$ in the unit circle. %, leading to a total of $2^{c+1}K$ roots.
\end{lemma}%
\begin{proof}%
We bring all terms in \eqref{eqn:solve_beta_0_symm_squared} to one side to obtain
\begin{equation}%
\bigl( B(\be_0) + C(\be_0) - \be_0^K(A(1) + B(1) + C(1) + D(1)) \bigr)^2 - \eta_c^2 F(\be_0)^2 -4\eta_c^2A(\be_0)D(\be_0) = 0. \label{eqn:zeros}
\end{equation}%
To determine the number of roots of \eqref{eqn:zeros} within the unit circle, we will use Rouch\'{e}'s theorem (see e.g. Theorem 9.2.3 in \cite{hille_analytic1959}, or more recent work in \cite{adan_Rouche2006}) twice. For now, let us ignore the $-4\eta_c^2A(\be_0)D(\be_0)$ term.  This leaves us with a difference of two squares, which we factor into its two terms. After some rearranging, these two terms can be written as
\begin{equation}%
B(\be_0)(1 - \eta_c) + C(\be_0)(1 + \eta_c) -\be_0^K \bigl( (A(1) + B(1))(1 - \eta_c) + (C(1) + D(1))(1 + \eta_c) \bigr) \label{eqn:factor1}
\end{equation}%
and
\begin{equation}%
B(\be_0)(1 + \eta_c) + C(\be_0)(1 - \eta_c) -\be_0^K \bigl( (A(1) + B(1))(1 + \eta_c) + (C(1) + D(1))(1 - \eta_c) \bigr). \label{eqn:factor2}
\end{equation}%
Let us first focus on \eqref{eqn:factor1}. We define
\begin{align}%
f_1(z) &:= -z^K \bigl( (A(1) + B(1))(1 - \eta_c) + (C(1) + D(1))(1 + \eta_c) \bigr), \\
g_1(z) &:= B(z)(1 - \eta_c) + C(z)(1 + \eta_c).
\end{align}%
Let $L \subset \mathbb{C}$ denote the closed unit disc and $\partial L$ the unit circle. Clearly, $f_1(z)$ has only one root $z = 0$ in $L$ of multiplicity $K$. We shall now show that
\begin{equation}%
|f_1(z)| > |g_1(z)|, \quad z \in \partial L,
\end{equation}%
so that it follows from Rouch\'{e}'s theorem that $f_1(z) + g_1(z)$ also has $K$ zeros in $L$.

Using that $(1 - \eta_c)$ and $(1 + \eta_c)$ are both nonnegative and evaluating $f_1(z)$ and $g_1(z)$ along $\partial L$, gives
\begin{align}%
|f_1(z)| &= | -z^K \bigl( (A(1) + B(1))(1 - \eta_c) + (C(1) + D(1))(1 + \eta_c) \bigr)| \notag \\
&= (A(1) + B(1))(1 - \eta_c) + (C(1) + D(1))(1 + \eta_c) \notag \\
&> B(1)(1 - \eta_c) + C(1)(1 + \eta_c) = B(|z|)(1 - \eta_c) + C(|z|)(1 + \eta_c) \notag \\
&\ge |B(z)(1 - \eta_c) + C(z)(1 + \eta_c)| = |g_1(z)|.
\end{align}%
Thus, \eqref{eqn:factor1} has $K$ roots in $L$ and using a similar argument, \eqref{eqn:factor2} also has $K$ roots in $L$. Therefore, the product of \eqref{eqn:factor1} and \eqref{eqn:factor2} yields $2K$ solutions within the unit circle.

Now we wish to once again use Rouch\'{e}'s theorem, this time on the complete equation \eqref{eqn:zeros}. Let us define
\begin{align}%
f_2(z) &:= \bigl( B(z) + C(z) - z^K(A(1) + B(1) + C(1) + D(1)) \bigr)^2 - \eta_c^2 F(z)^2, \\
g_2(z) &:= 4 \eta_c^2 A(z) D(z),
\end{align}%
where $f_2(z)$ is the product of \eqref{eqn:factor1} and \eqref{eqn:factor2} and note that \eqref{eqn:zeros} can be expressed as $f_2(z) - g_2(z) = 0$. We have already determined that $f_2(z)$ has $2K$ zeros in $L$. We will now work towards the proof that each equation of \eqref{eqn:solve_beta_0_symm_squared} has $2K$ solutions. For readability, we introduce
\begin{align}%
\tau_1(\eta_c) &:= (A(1) + B(1))(1 - \eta_c) + (C(1) + D(1))(1 + \eta_c), \\
\tau_2(\eta_c) &:= (A(1) + B(1))(1 + \eta_c) + (C(1) + D(1))(1 - \eta_c).
\end{align}%
Let us first establish a relation between $|f_2(z)|$ and $f_2(|z|)$:
\begin{align}%
|f_2(z)| &= | \bigl( B(z) + C(z) - z^K(A(1) + B(1) + C(1) + D(1)) \bigr)^2 - \eta_c^2F(z)^2 | \notag \\
&= | B(z)(1 - \eta_c) + C(z)(1 + \eta_c) - z^K \tau_1(\eta_c) | \notag \\
&\quad \times |B(z)(1+\eta_c) + C(z)(1-\eta_c) -z^K \tau_2(\eta_c)| \notag \\
&= | -B(z)(1 - \eta_c) - C(z)(1 + \eta_c) + z^K \tau_1(\eta_c) | \notag \\
&\quad \times | -B(z)(1 + \eta_c) - C(z)(1 - \eta_c) + z^K \tau_2(\eta_c) | \notag \\
&\ge \bigl( -B(|z|)(1 - \eta_c) - C(|z|)(1 + \eta_c) + |z|^K \tau_1(\eta_c) \bigr) \notag \\
&\quad \times \bigl( -B(|z|)(1 + \eta_c) - C(|z|)(1 - \eta_c) + |z|^K \tau_2(\eta_c) \bigr) \notag \\
&= f_2(|z|).
\end{align}%
Also, because of the triangle inequality, $g_2(|z|) \ge |g_2(z)|$. It thus suffices to show that
\begin{equation}%
f_2(|z|) > g_2(|z|), \quad z \in \partial L,
\end{equation}%
to prove that \eqref{eqn:zeros} has $2K$ roots for each combination of $x_i$'s.

Let us now evaluate $f_2(|z|)$ along $\partial L$, so $|z| = 1$, and
\begin{align}%
f_2(1) &= (-A(1) - D(1))^2 - \eta_c^2(A(1) - D(1))^2 \notag \\
&= A(1)^2 + 2A(1)B(1) + B(1)^2 - \eta_c^2(A(1)^2 - 2A(1)B(1) + D(1)^2) \notag \\
&= (1 - \eta_c^2) (A(1)^2 + D(1)^2) + 2(1 + \eta_c^2)A(1)D(1) \notag \\
&\ge 4\eta_c^2A(1)D(1) = g_2(1).
\end{align}%
Equality between $f_2(1)$ and $g_2(1)$ occurs only when $\eta_c^2 = 1$. We can apply Rouch\'{e}'s theorem for all $\eta_c^2 < 1$.

In order to use the theorem when $\eta_c^2 = 1$, we will essentially evaluate $f_2(|z|)$ and $g_2(|z|)$ along the circle $|z| = 1 - \epsilon$. To accomplish this, we use the Taylor expansion, $f(1 - \epsilon) = f(1) - \epsilon f'(1) + o(\epsilon)$. We want to show that $f_2(1 - \epsilon) > g_2(1 - \epsilon)$ for $\epsilon$ sufficiently small. As we have already determined that $f_2(1) = g_2(1)$, this leaves us to prove that
\begin{equation}%
f_2'(1) < g_2'(1).
\end{equation}%
Using $\eta_c^2 = 1$,
\begin{equation}%
f_2'(1) = -4A(1) \bigl( C'(1) - K(C(1) + D(1)) \bigr) - 4D(1) \bigl( B'(1) - K(A(1) + B(1)) \bigr)
\end{equation}%
and
\begin{equation}
g_2'(1) = 4(A(1)D'(1) + D(1)A'(1)).
\end{equation}
From the ergodicity condition \eqref{eqn:ergo_symm} we then know that $f_2'(1) < g_2'(1)$. Therefore, Rouch\'{e}'s theorem is applicable and \eqref{eqn:zeros} has $2K$ roots for each combination of $x_i$'s. %, leading to $2^{c+1} K$ roots.
\end{proof}%

\begin{remark}[Duplicate roots]\label{rem:duplicateRoots}%
We still need to resolve the issue of having twice as many roots as needed, namely $2^{c + 1} K$ roots, which is a result of squaring \eqref{eqn:solve_beta_0_symm}. For every possible value of $\eta_c$, say $\eta^*_c$, $-\eta^*_c$ is also one of the values of $\eta_c$. Using $\eta^*_c$ or $-\eta^*_c$ will yield the same roots $\be_0$, thus half of our roots are duplicates. If $\eta_c = 0$ we find $2K$ roots of which half are duplicates as well. This leaves us with $2^c K$ useful roots within the unit circle.
\end{remark}%
\begin{remark}[Number of unique solutions]\label{rem:numberOfUniqueSolutions}%
The sum $\sum_{i = 1}^c x_i$ can take $c + 1$ unique values, and using Lemma~\ref{lem:numberOfRootsK>1} and Remark~\ref{rem:duplicateRoots} we see that there are $K(c + 1)$ unique roots $\be_0$. Notice, however, that this still gives $2^c K$ unique solutions in the form of $\be_{0,j}^{n_0} \cdots \be_{c,j}^{n_c}$, as different values of $x_i$ lead to different products, even though $\be_{0,j}$ is the same. Here we have used Assumption~\ref{as:overdetermined} to make sure that the coefficient of $x_i$ in \eqref{eqn:beta_i} is nonzero.
\end{remark}%

Let us now formulate the following theorem.

\begin{theorem}\label{th:distSymm}%
For the symmetric multi-dimensional Markov processes with $K \ge 1$, the equilibrium distribution takes the form
\begin{equation}%
p(\bld{n}) = \sum_{j = 1}^{2^c K} \al_j \be_{0,j}^{n_0} \be_{1,j}^{n_1} \cdots \be_{c,j}^{n_c}, \quad \bld{n} \in W,
\end{equation}%
where $\be_{0,j}$ are the roots of \eqref{eqn:solve_beta_0_symm} within the unit circle and $\be_{i,j}, ~ i = 1,\ldots,c$ are the corresponding constants found from \eqref{eqn:beta_i}. The \textup{(}possibly complex-valued\textup{)} constants $\al_j$ are determined from the boundary equations and the normalization condition.
\end{theorem}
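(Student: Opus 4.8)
The plan is to mirror the structure of the proof of Theorem~\ref{th:distK=1}, replacing the root-counting step for $K=1$ by the machinery developed for the symmetric case. By Lemma~\ref{lem:p(n)} it suffices to exhibit $2^c K$ linearly independent, absolutely convergent solutions of the inner equations \eqref{eqn:EQ}; the coefficients $\al_j$ then follow from the boundary equations together with the normalization condition. Thus the whole task reduces to producing the correct number of independent product forms of the shape \eqref{eqn:PF} with $|\be_0| < 1$.

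First I would collect the roots. Lemma~\ref{lem:numberOfRootsK>1} guarantees that, for each of the $2^c$ sign patterns $(x_1,\ldots,x_c)$, the squared equation \eqref{eqn:solve_beta_0_symm_squared} has $2K$ roots inside the unit circle. Since this squared equation depends on the sign pattern only through $\eta_c^2$, I would invoke Remark~\ref{rem:duplicateRoots} to discard the duplicates created by the squaring: the patterns giving $\eta_c$ and $-\eta_c$ produce the same roots $\be_0$, so exactly half of the $2^{c+1}K$ roots are genuine roots of the unsquared equation \eqref{eqn:solve_beta_0_symm}. This leaves $2^c K$ usable roots $\be_0$ with $|\be_0| < 1$, and for each such root together with its sign pattern I would read off $\be_1,\ldots,\be_c$ from \eqref{eqn:beta_i}, yielding $2^c K$ products $\be_{0,j}^{n_0}\be_{1,j}^{n_1}\cdots\be_{c,j}^{n_c}$.

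Next I would verify the two properties demanded by Lemma~\ref{lem:p(n)}. Absolute convergence is immediate since $|\be_{0,j}| < 1$. Linear independence follows from the Vandermonde argument of Lemma~\ref{lem:linearlyIndependentProductsK=1}, which, as noted after Assumption~\ref{as:overdetermined}, carries over verbatim to $K \ge 2$ with $2^c K$ products in place of $2^c$. Here it is essential, per Remark~\ref{rem:numberOfUniqueSolutions}, that although there are only $K(c+1)$ distinct values of $\be_0$, distinct sign patterns $(x_1,\ldots,x_c)$ sharing a common $\be_0$ still yield distinct products, because Assumption~\ref{as:overdetermined} forces the square-root term in \eqref{eqn:beta_i} to be nonzero so that the sign $x_i$ genuinely changes $\be_i$; Assumption~\ref{as:Dnot0} guarantees in addition that \eqref{eqn:beta_i} is well defined at every root. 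With the $2^c K$ solutions shown to be linearly independent and absolutely convergent, Lemma~\ref{lem:p(n)} delivers the claimed representation.

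The main obstacle is the bookkeeping around the squaring step rather than any new analytic estimate: one must confirm that the $2^{c+1}K$ roots of the squared equation collapse to exactly $2^c K$ genuinely distinct and independent products, neither losing solutions that correspond to a $\be_0$ shared across sign patterns nor retaining the spurious duplicates introduced by squaring. All of the quantitative content, namely the Rouch\'{e} count and the linear-independence mechanism, is already in hand from Lemma~\ref{lem:numberOfRootsK>1} and Lemma~\ref{lem:linearlyIndependentProductsK=1}, so the remainder is assembly guided by Remarks~\ref{rem:duplicateRoots} and \ref{rem:numberOfUniqueSolutions}.
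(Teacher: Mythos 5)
Your proposal is correct and follows essentially the same route as the paper: reduce via Lemma~\ref{lem:p(n)} to exhibiting $2^cK$ independent absolutely convergent product forms, obtain the roots from Lemma~\ref{lem:numberOfRootsK>1} combined with Remark~\ref{rem:duplicateRoots}, and get linear independence from Lemma~\ref{lem:linearlyIndependentProductsK=1} (with Assumption~\ref{as:overdetermined} and Remark~\ref{rem:numberOfUniqueSolutions} handling shared $\be_0$ values). Your write-up is if anything more explicit than the paper's about the duplicate-root bookkeeping after squaring.
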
%
\begin{proof}%
The proof is similar to the proof of Theorem~\ref{th:distK=1}. Lemma~\ref{lem:numberOfRootsK>1} together with Remark~\ref{rem:duplicateRoots} describe that we find exactly $2^c K$ solutions $\be_0$. Using \eqref{eqn:beta_i} we find $2^c K$ products $\be_{0,j}^{n_0} \cdots \be_{c,j}^{n_c}, ~ j = 1,\ldots,2^c K$. These products are absolutely convergent, as $|\be_0| < 1$, and linearly independent, according to Lemma~\ref{lem:linearlyIndependentProductsK=1}. Then, Lemma~\ref{lem:p(n)} shows that the $2^c K$ linearly independent and absolutely convergent solutions can be used to express the equilibrium distribution as a linear combination of these solutions.
\end{proof}%

%%%%%%%%%%%%%%%%%%%%%%%%%%%%%%%%%%%%%%%%%%%%%%%%%%%%%%%
%%%%%%%%%%%%%%%%%%%%%%%%%%%%%%%%%%%%%%%%%%%%%%%%%%%%%%%
%%%%%%%%%%%%%%%%%%%%% NEW SECTION %%%%%%%%%%%%%%%%%%%%%
%%%%%%%%%%%%%%%%%%%%%%%%%%%%%%%%%%%%%%%%%%%%%%%%%%%%%%%
%%%%%%%%%%%%%%%%%%%%%%%%%%%%%%%%%%%%%%%%%%%%%%%%%%%%%%%

\section{Aggregated state concept}%
\label{sec:aggregated_state_concept}%

We now show how the results from the symmetric multi-dimensional Markov process are related to a Markov process on a semi-infinite strip.

For symmetric Markov processes, we can exploit the symmetry and transform the Markov process to an aggregated state space characterized by $(n_0,m)$, where $m = \sum_{i = 1}^c n_i$. This state space is a semi-infinite strip of states. The transition structure induced by the structure of the multi-dimensional Markov process is as follows. For $n_0 \ge K$ and $k = -n_0,-n_0 + 1,\ldots,K$:
\begin{itemize}%
\item From $(n_0,m)$ to $(n_0 + k,m + 1)$ with rate $a_k(c - m)$.
\item From $(n_0,m)$ to $(n_0 + k,m)$ with rate $b_k(c - m) + c_k m$.
\item From $(n_0,m)$ to $(n_0 + k,m - 1)$ with rate $d_k m$.
\item From $(n_0,m)$ to the states in $V$ with total rate $\sum_{k = -\infty}^{-n_0 - 1} \bigl( (a_k + b_k)(c - m) + (c_k + d_k)m \bigr)$.
\end{itemize}%
Transitions from $(n_0,m)$ are only possible to states $(n,l)$ with $n \le n_0 + K$ and $l \in \{m-1,m,m+1\}$. Further, the rates $a_k$ and $b_k$ are scaled by a factor $c - m$, and the rates $c_k$ and $d_k$ are scaled by a factor $m$. Observe the linear structure due to the symmetry in the transitions in the dimensions $n_i, ~ i = 1,\ldots,c$. The transition rate diagram is shown in Figure~\ref{fig:stateSpaceSymmetric}.

\begin{figure}%
\centering%
\includegraphics{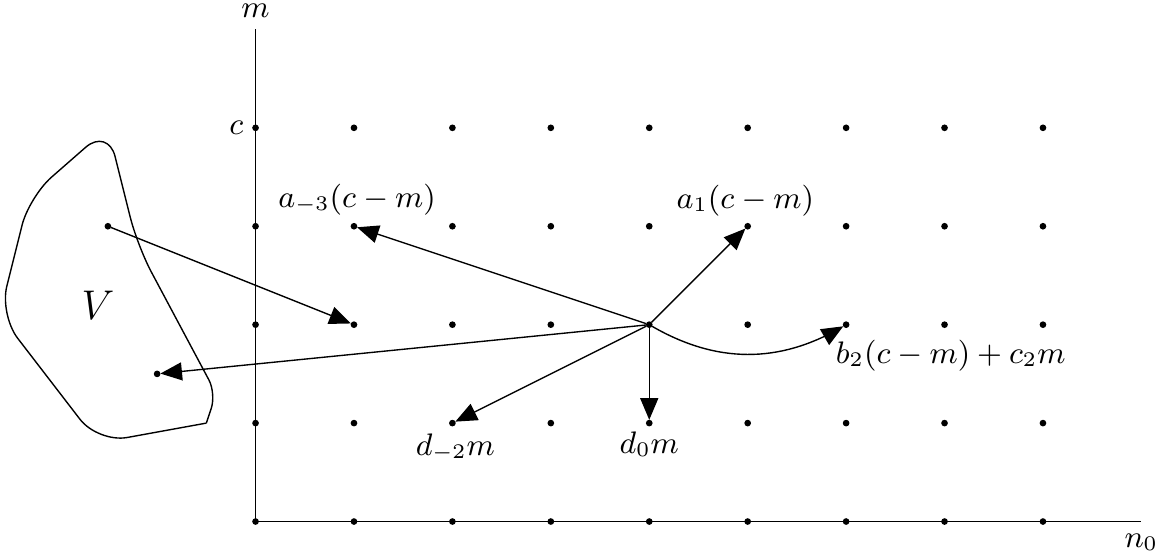}% PDF figure
\caption{Transition rate diagram for a symmetric Markov process on the aggregated state space.}%
\label{fig:stateSpaceSymmetric}%
\end{figure}%

Note that the ergodicity condition of the model on the aggregated state space is identical to the one derived for the symmetric multi-dimensional model, i.e.~\eqref{eqn:ergo_symm}.

We are interested in the equilibrium distribution of this aggregated state space. We obtain this distribution by using the basis of solutions of the multi-dimensional symmetric Markov process. Translation of the basis solutions $p_j(\bld{n}) = \be_{0,j}^{n_0} \cdots \be_{c,j}^{n_c}$ to the aggregated state description yields $p_j(n_0,m) = \be_{0,j}^{n_0} \omega_j(m)$, where $\omega_j(m)$ is given by
\begin{equation}%
\omega_j(m) = \sum_{n_1 + \cdots + n_c = m} \be_{1,j}^{n_1} \cdots \be_{c,j}^{n_c}. \label{eqn:omega_j(m)}
\end{equation}%
Basis solutions that are equal up to a permutation of the last $c$ factors are translated to exactly the same basis solutions in the new state space description. Hence, we relabel the basis solutions $\be_{0,j}^{n_0} \cdots \be_{c,j}^{n_c}, ~ j = 1,\ldots,2^c K$ such that the first $K(c + 1)$ solutions cannot be obtained from one another by permuting the last $c$ factors. Alternatively, one can only use the $c + 1$ unique values of $\eta_c$ in \eqref{eqn:solve_beta_0_symm} to obtain $K(c + 1)$ basis solutions. Hence, we obtain the reduced set of solutions $\be_{0,j}^{n_0} \omega_j(m), ~ j = 1,\ldots,K(c + 1)$. Then, Theorem~\ref{th:distAggregated} is a translation of Theorem~\ref{th:distSymm} in terms of the aggregated state space.
\begin{theorem}\label{th:distAggregated}%
For symmetric multi-dimensional Markov processes with the aggregated state space $(n_0,m)$, where $m = \sum_{i = 1}^c n_i$, the equilibrium distribution is of the form
\begin{equation}%
p(n_0,m) = \sum_{j = 1}^{K(c + 1)} \ga_j \be_{0,j}^{n_0} \omega_j(m), \quad n_0 \in \Nat_0, ~ m = 0,\ldots,c,
\end{equation}%
where $\be_{0,j}$ are the unique roots of \eqref{eqn:solve_beta_0_symm} within the unit circle and $\omega_j(m)$ is given by \eqref{eqn:omega_j(m)}. The \textup{(}possibly complex-valued\textup{)} constants $\ga_j$ are determined from the boundary equations and normalization condition.
\end{theorem}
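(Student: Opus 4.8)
The plan is to obtain Theorem~\ref{th:distAggregated} as a direct translation of Theorem~\ref{th:distSymm}, exploiting the aggregation map $\bld{n} \mapsto (n_0, m)$ with $m = \sum_{i=1}^c n_i$. The starting point is the representation from Theorem~\ref{th:distSymm}, namely $p(\bld{n}) = \sum_{j=1}^{2^c K} \al_j \be_{0,j}^{n_0} \be_{1,j}^{n_1} \cdots \be_{c,j}^{n_c}$. First I would observe that for a symmetric process the equilibrium probability $p(\bld{n})$ depends on $(n_1,\ldots,n_c)$ only through $m$, since states that differ by a permutation of the server coordinates are statistically indistinguishable; hence $p(n_0,m)$ is well defined as the common value of $p(\bld{n})$ over all $\bld{n}$ with fixed $n_0$ and fixed coordinate-sum $m$. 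Summing the multi-dimensional representation over the $\binom{c}{m}$ states with a given $m$ and using the definition \eqref{eqn:omega_j(m)} of $\omega_j(m)$ then immediately yields an expression $p(n_0,m) = \sum_{j=1}^{2^c K} \al_j \be_{0,j}^{n_0} \omega_j(m)$.

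The next step is to collapse this sum from $2^c K$ to $K(c+1)$ terms. As noted in the text preceding the theorem, two basis solutions that are related by a permutation of the last $c$ factors $\be_{1,j},\ldots,\be_{c,j}$ produce identical symmetric functions $\omega_j(m)$, because $\omega_j(m)$ is itself a symmetric function of $\be_{1,j},\ldots,\be_{c,j}$ (the elementary-symmetric-type sum over $n_1+\cdots+n_c=m$). Concretely, by \eqref{eqn:beta_i} each $\be_{i,j}$ takes one of two values determined by $x_i \in \{-1,1\}$ for the common $\be_{0,j}$, and $\omega_j(m)$ depends on $(x_1,\ldots,x_c)$ only through the count of $+1$'s, equivalently through $\eta_c$. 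Grouping the $2^c K$ terms by their value of $(\be_{0,j}, \omega_j)$ and summing the corresponding $\al_j$ into new coefficients $\ga_j$ reduces the combination to the $K(c+1)$ genuinely distinct pairs indexed by the $c+1$ values of $\sum_i x_i$ and the $K$ roots $\be_{0,j}$ per value (Remark~\ref{rem:numberOfUniqueSolutions}). This gives the claimed form with $j=1,\ldots,K(c+1)$.

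Finally I would justify that the resulting $\ga_j$ are uniquely determined. This follows the same logic as Lemma~\ref{lem:p(n)}: the functions $\be_{0,j}^{n_0}\omega_j(m)$, $j=1,\ldots,K(c+1)$, satisfy the inner balance equations of the aggregated chain (they are aggregates of solutions of \eqref{eqn:EQ}, and aggregation commutes with the balance equations because the aggregated transition rates in the itemized list are exactly the row-sums of the multi-dimensional rates over states with fixed $m$), they are absolutely convergent since $|\be_{0,j}|<1$, and they are linearly independent. Linear independence of the reduced set can be argued either by restricting the Vandermonde argument of Lemma~\ref{lem:linearlyIndependentProductsK=1} to the aggregated variables, or by noting that the $K(c+1)$ pairs $(\be_{0,j},\omega_j)$ are distinct. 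Since the aggregated chain inherits ergodicity \eqref{eqn:ergo_symm} and has the same count of boundary equations once reduced, uniqueness of the $\ga_j$ from the boundary equations and normalization follows exactly as before.

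I expect the main obstacle to be the linear-independence step for the \emph{reduced} set $\{\be_{0,j}^{n_0}\omega_j(m)\}_{j=1}^{K(c+1)}$, since after aggregation the clean product structure exploited in Lemma~\ref{lem:linearlyIndependentProductsK=1} is partially lost: one must verify that distinct $(\eta_c, \be_{0,j})$ pairs really give linearly independent functions of $(n_0,m)$, i.e.\ that no nontrivial coincidence among the $\omega_j(m)$ arises for shared roots $\be_{0,j}$. Here Assumption~\ref{as:overdetermined} is what guarantees the two $\be_i$-values are genuinely distinct for each $\be_{0,j}$, so that different $\eta_c$'s yield different symmetric polynomials $\omega_j$; invoking it is the crux of the argument, after which the remaining verifications are the routine bookkeeping already handled by the cited lemmas.
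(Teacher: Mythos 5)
Your argument is essentially the paper's own: the paper offers no separate formal proof of this theorem, only the paragraph preceding it, which observes exactly what you do---that each product form aggregates to $\be_{0,j}^{n_0}\omega_j(m)$, that product forms differing by a permutation of the last $c$ factors aggregate to the same function, and that grouping by the $c+1$ values of $\eta_c$ leaves $K(c+1)$ distinct terms (your extra care about linear independence of the reduced set, resting on Assumption~\ref{as:overdetermined}, is a welcome addition the paper omits). One small wobble: you first declare $p(n_0,m)$ to be the \emph{common value} of $p(\bld{n})$ over the $\binom{c}{m}$ permuted micro-states and then \emph{sum} over those states; the consistent (and intended) definition is the sum, which is what your final formula and the paper's $\omega_j(m)$ actually correspond to.
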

\begin{remark}[Generating function technique]%
In \cite{adan_zaragoza1999}, a generating function technique is used to obtain the equilibrium distribution for the same symmetric class of processes with $K = 1$, where the generating function is over the finite index $m$. This technique can also be used to obtain the equilibrium distribution for the symmetric case $K \ge 2$, and in fact, leads to equations for $\be_0$ that are the same as \eqref{eqn:solve_beta_0_symm}.
\end{remark}%

%%%%%%%%%%%%%%%%%%%%%%%%%%%%%%%%%%%%%%%%%%%%%%%%%%%%%%%
%%%%%%%%%%%%%%%%%%%%%%%%%%%%%%%%%%%%%%%%%%%%%%%%%%%%%%%
%%%%%%%%%%%%%%%%%%%%% NEW SECTION %%%%%%%%%%%%%%%%%%%%%
%%%%%%%%%%%%%%%%%%%%%%%%%%%%%%%%%%%%%%%%%%%%%%%%%%%%%%%
%%%%%%%%%%%%%%%%%%%%%%%%%%%%%%%%%%%%%%%%%%%%%%%%%%%%%%%

\section{First passage times}%
\label{sec:absorption_times}%

In this section we study, for symmetric processes on the aggregated state space, the time until first passage to the set $V$, given that at time $t = 0$, the process starts in state $(n_0,m)$ and for $t \ge 0$ transitions in the positive $n_0$-direction are not possible (so no transitions with rate $a_k,\ldots,d_k, ~ k > 0$ for $t \ge 0$). The motivation for studying the first passage time is that for many queueing systems, this time is exactly the waiting time of a customer arriving in state $(n_0,m)$. Note that this equality holds due to the fact that we only consider the FCFS policy for these queueing systems.

Let $F_{n_0,m}(t)$ denote the probability that the first passage time to the set $V$ exceeds $t$, given that the process starts in state $(n_0,m)$ at time $t = 0$.
\begin{example}\label{ex:whyF}%
Consider Example~\ref{ex:running} with $c$ parallel identical servers and batch arrivals of maximum size $K$. Let $W$ denote the waiting time of a customer. By conditioning on the state seen on arrival and using that Poisson arrivals see time averages (PASTA, see \cite{wolff_PASTA1982}), we obtain
\begin{equation}%
\Prob{W > t} = \sum_{n_0 = 0}^\infty \sum_{m = 0}^c p(n_0,m) F_{n_0,m}(t) = \sum_{j = 1}^{K(c + 1)} \ga_j \sum_{n_0 = 0}^\infty \sum_{m = 0}^c \be_{0,j}^{n_0} \omega_j(m) F_{n_0,m}(t) ~ =: F(t). \label{eqn:F(t)def}
\end{equation}%
Hence, once $F_{n_0,m}(t)$ is known, we can determine the waiting time distribution.
\end{example}%

Let level $n_0$ denote the set of states $\{(n_0,0),\ldots,(n_0,c)\}$. We introduce a matrix notation $\La_i, ~ i = -n_0,-n_0 + 1,\ldots,K$ for the jumps from level $n_0$ to level $n_0 + i$, where $n_0 \ge 0$. To exemplify, the $(a,b)$-th entry of $\La_i$ describes the rate of going from state $(n_0,a)$ to state $(n_0 + i,b)$. By assuming a structure on the matrices $\La_i, ~ i = 1,\ldots,K$ we are able to explicitly derive the first passage time to the set $V$.
\begin{assumption}\label{as:AT_structure_matrices}%
Jumps in the positive $n_0$-direction are of the form
\begin{equation}%
\La_i = \la_i I, \quad i = 1,\ldots,K,
\end{equation}%
for some $\la_i \ge 0$ and $I$ the identity matrix.
\end{assumption}%

Note that for queueing systems, $\la_i I, ~ i = 1,\ldots,K$, models the Poisson arrival rate of a batch of size $i$. For the examples presented in Section~\ref{subsec:examples} the above assumption holds. Note that the assumption of Poisson arrivals is a natural assumption to make, otherwise \eqref{eqn:F(t)def} would not hold. By introducing a supplementary variable we are still able to analyze the waiting time distribution if there are no Poisson arrivals, see Remark~\ref{rem:IPP}.

Using the above assumption, we are now able to express $F(t)$ explicitly in terms of a finite sum.
\begin{theorem}\label{th:absorptionTime}%
The function $F(t)$ defined by the right-hand side of \eqref{eqn:F(t)def} is explicitly given by
\begin{equation}%
F(t) = \sum_{j = 1}^{K(c+1)} \frac{\ga_j \omega_j \oneb}{1 - \be_{0,j}} ~ \euler^{t \sum_{i = 1}^{K}(1 - \frac{1}{\be_{0,j}^{i}}) \la_i}, \quad t \ge 0, \label{eqn:AT}
\end{equation}%
with $\omega_j = \begin{pmatrix} \omega_j(0) & \omega_j(1) & \cdots & \omega_j(c) \end{pmatrix}$.
\end{theorem}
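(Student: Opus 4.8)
The plan is to derive a system of differential equations for the survival functions $F_{n_0,m}(t)$ directly from the no-arrival dynamics, and then to show that the specific linear combination $F(t)$ defined in \eqref{eqn:F(t)def} collapses these into a single solvable system because of the product-form structure. Since for $t \ge 0$ only downward and same-level transitions are active, the generator restricted to the taboo set $W$ is lower-block-triangular in $n_0$. Writing $\bld{F}_{n_0}(t) = \begin{pmatrix} F_{n_0,0}(t) & \cdots & F_{n_0,c}(t) \end{pmatrix}^T$, the backward Kolmogorov equations give $\frac{d}{dt}\bld{F}_{n_0}(t) = \sum_{i = -n_0}^{0} \La_i \bld{F}_{n_0 + i}(t) - \Delta_{n_0} \bld{F}_{n_0}(t)$, where $\Delta_{n_0}$ is the diagonal matrix of total outgoing rates and $\La_0$ captures same-level moves. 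The initial condition is $\bld{F}_{n_0}(0) = \oneb$ for every $n_0$, since absorption has not yet occurred.

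The key observation I would exploit is that the sum over $n_0$ in \eqref{eqn:F(t)def} is weighted by $\be_{0,j}^{n_0}$, which is precisely an eigenvector-type weighting for the shift structure. Define the generating-function-like quantity $G_j(t) := \sum_{n_0 = 0}^\infty \be_{0,j}^{n_0} \bld{F}_{n_0}(t)$, a row-vector-valued (or vector-valued) function in the aggregated $m$-coordinate. Multiplying the backward equation by $\be_{0,j}^{n_0}$ and summing over $n_0$, the downward jumps $\La_i$ with $i < 0$ contribute a factor $\be_{0,j}^{-i}$ upon reindexing $n_0 + i \mapsto n_0$, so the $n_0$-dependence decouples and $G_j(t)$ satisfies an autonomous linear ODE $\frac{d}{dt} G_j(t) = M(\be_{0,j}) G_j(t)$ for a fixed matrix $M(\be_{0,j})$ built from $\sum_i \be_{0,j}^{-i}\La_i$ minus the diagonal terms. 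Here Assumption~\ref{as:AT_structure_matrices} is essential: because the positive jumps are $\la_i I$, the diagonal total-rate matrix $\Delta_{n_0}$ contains the arrival contribution $\sum_{i=1}^K \la_i$ on every row, and it is exactly these positive-jump terms (absent from the dynamics but present in $\Delta$) that produce the clean scalar factor $\sum_{i=1}^K (1 - \be_{0,j}^{-i})\la_i$ appearing in the exponent of \eqref{eqn:AT}.

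The crucial structural fact is then that $\omega_j$, as the row vector $\begin{pmatrix} \omega_j(0) & \cdots & \omega_j(c) \end{pmatrix}$ with $\omega_j(m)$ given by \eqref{eqn:omega_j(m)}, is a left eigenvector of the no-arrival part of the aggregated generator with eigenvalue determined by $\be_{0,j}$. This is because $\be_{0,j}^{n_0}\omega_j(m)$ already solves the full inner equations \eqref{eqn:EQ} (it is one of the product-form basis solutions of Theorem~\ref{th:distAggregated}); subtracting off the arrival terms leaves $\omega_j$ satisfying the homogeneous relation governing the downward and same-level rates scaled appropriately. Contracting the ODE for $G_j(t)$ against $\omega_j$ on the left and $\oneb$ on the right, the matrix action reduces to multiplication by the scalar $\sum_{i=1}^K(1 - \be_{0,j}^{-i})\la_i$, yielding $\frac{d}{dt}\bigl(\omega_j G_j(t)\oneb\bigr) = \Bigl(\sum_{i=1}^K(1 - \be_{0,j}^{-i})\la_i\Bigr)\,\omega_j G_j(t)\oneb$. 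With the initial value $\omega_j G_j(0)\oneb = \omega_j\oneb\sum_{n_0}\be_{0,j}^{n_0} = \frac{\omega_j\oneb}{1 - \be_{0,j}}$, this integrates to the desired exponential, and summing over $j$ with weights $\ga_j$ reproduces \eqref{eqn:AT}.

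The main obstacle I anticipate is rigorously justifying the left-eigenvector claim for $\omega_j$ and the interchange of summation with differentiation in defining $G_j(t)$. For the eigenvector identity one must carefully separate the arrival part (which is removed in the first passage dynamics) from the service/breakdown part (which remains), and verify that the residual relation is exactly the one encoded by the balance equation \eqref{eqn:EQ_GF} with the $\be_0^K$ arrival terms stripped out; the bookkeeping between the multi-dimensional rates and their aggregated, $m$-scaled counterparts (the factors $c-m$ and $m$ in Section~\ref{sec:aggregated_state_concept}) is where errors could creep in. For the analytic interchange, absolute convergence of $\sum_{n_0}\be_{0,j}^{n_0}$ (guaranteed by $|\be_{0,j}| < 1$) together with the uniform bound $0 \le F_{n_0,m}(t) \le 1$ should allow dominated convergence and term-by-term differentiation, so this part is routine once stated, but it must be addressed to make the formal generating-function manipulation legitimate.
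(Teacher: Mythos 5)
Your overall strategy coincides with the paper's: form $G_j(t) = \sum_{n_0}\be_{0,j}^{n_0}F_{n_0}(t)$, use the fact that $\be_{0,j}^{n_0}\omega_j(m)$ satisfies the inner balance equations to convert the downward-jump contribution into $-\omega_j\sum_{i=0}^{K}\be_{0,j}^{-i}\La_i$, and invoke Assumption~\ref{as:AT_structure_matrices} so that the surviving operator is a scalar multiple of the identity acting on $\omega_j G_j$; the paper does this bookkeeping in the Laplace domain while you work directly with the ODE, which is immaterial. There is, however, a concrete error in your backward Kolmogorov equations. You write
\begin{equation*}
\frac{\dinf}{\dinf t}F_{n_0}(t) = \sum_{i=-n_0}^{0}\La_i F_{n_0+i}(t) - \Delta_{n_0}F_{n_0}(t),
\end{equation*}
with $\La_0$ off-diagonal and $\Delta_{n_0}$ the diagonal of \emph{total} outgoing rates, and you explicitly state that $\Delta_{n_0}$ retains the arrival contribution $\sum_{i=1}^{K}\la_i$ even though the positive jumps are absent from the dynamics. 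That generator has row sums $-\sum_{i=1}^{K}\la_i$ (besides the absorption into $V$): it describes the no-arrival process with \emph{additional state-independent killing} at rate $\sum_i\la_i$, not the first passage time of the process in which arrivals are switched off. Once arrivals are removed, the holding rates must be reduced accordingly; in the paper's notation the diagonal block of the taboo generator is $\sum_{i=0}^{K}\La_i = \La_0 + \bigl(\sum_{i=1}^{K}\la_i\bigr)I$, where the added diagonal term exactly cancels the arrival rates sitting in the diagonal of $\La_0$.

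This is not a cosmetic slip. Contracting your operator against $\omega_j$ via the characteristic identity $\omega_j\sum_{i=-\infty}^{K}\be_{0,j}^{-i}\La_i = 0$ yields the eigenvalue $-\sum_{i=1}^{K}\be_{0,j}^{-i}\la_i$ rather than $\sum_{i=1}^{K}(1-\be_{0,j}^{-i})\la_i$; equivalently, your $F(t)$ comes out as $\euler^{-t\sum_{i}\la_i}$ times the right-hand side of \eqref{eqn:AT}. The missing $+\sum_i\la_i$ in the exponent is precisely the arrival rate you left in the holding rates, so the computation as set up does not deliver the factor you assert it produces. The remainder of your argument is sound and matches the paper: the left-null-vector property of $\omega_j$ requires no separate verification since it is just the inner equation \eqref{eqn:EQ_GF} read in the aggregated coordinates; the initial value is $\omega_j G_j(0) = \omega_j\oneb/(1-\be_{0,j})$; Assumption~\ref{as:AT_structure_matrices} is indeed what closes the equation for the scalar $\omega_j G_j(t)$; and $|\be_{0,j}|<1$ together with $0\le F_{n_0,m}(t)\le 1$ justifies the term-by-term manipulations. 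Correct the diagonal and the rest goes through.
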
%
\begin{proof}%
We first derive a set of differential equations for the probabilities $F_{n_0,m}(t)$. For small $\Delta t \ge 0$ it holds that
\begin{equation}%
F_{n_0}(t + \Delta t) = F_{n_0}(t) + \Delta t \sum_{i = 0}^{K} \La_i F_{n_0}(t) + \Delta t \sum_{k = 1}^{n_0} \La_{-k} F_{n_0 - k}(t) + o(\Delta t),
\end{equation}%
where $F_{n_0}(t) = \begin{pmatrix} F_{n_0,0}(t) & F_{n_0,1}(t) & \cdots & F_{n_0,c}(t) \end{pmatrix}^T$. Dividing these equations by $\Delta t$ and letting $\Delta t$ tend to zero yields the backward Kolmogorov equations
\begin{equation}%
F_{n_0}'(t) = \sum_{i = 0}^{K} \La_i F_{n_0}(t) + \sum_{k = 1}^{n_0} \La_{-k} F_{n_0 - k}(t), \quad n_0 \in \Nat_0 \label{eqn:Fdiff}
\end{equation}%
with initial condition $F_{n_0}(0) = \oneb$, where $\oneb$ is column vector of size $c + 1$ with all ones. To solve these differential equations we use Laplace transforms. Let
\begin{equation}%
F_{n_0}^*(s) = \int_{0}^\infty F_{n_0}(t) \euler^{-st} \, \dinf t, \quad s \ge 0.
\end{equation}%
Transforming the differential equations \eqref{eqn:Fdiff} for $F_{n_0}(t)$ %and using that
%%
%\begin{equation}
%\int_{t=0}^\infty F_{n_0}'(t) e^{-st} dt = -F_{n_0}(0) + s F_{n_0}^*(s) = -\oneb + s F_{n_0}^*(s),
%\end{equation}
%%
gives
\begin{equation}%
\Bigl( sI - \sum_{i = 0}^{K} \La_i \Bigr)F_{n_0}^*(s) = \oneb + \sum_{k = 1}^{n_0} \La_{-k} F_{n_0 - k}^*(s). \label{eqn:AT_proof1}
\end{equation}%
From these equations, the Laplace transforms $F_{n_0}^*(s)$ can be solved recursively. Guided by Example~\ref{ex:whyF}, we define
\begin{equation}%
F(t) = \sum_{j = 1}^{K(c + 1)} \ga_j \omega_j \sum_{n_0 = 0}^\infty \be_{0,j}^{n_0} F_{n_0}(t), \quad t \ge 0,
\end{equation}%
and the Laplace transforms, for $s \ge 0$,
\begin{align}%
G_j^*(s) &= \sum_{n_0 = 0}^\infty \be_{0,j}^{n_0} F_{n_0}^*(s), \quad j = 1,\ldots,K(c + 1), \\
F^*(s) &= \int_{0}^\infty F(t) \euler^{-st} \,\dinf t = \sum_{j = 1}^{K(c + 1)} \ga_j \omega_j G_j^*(s).
\end{align}%
Multiplying \eqref{eqn:AT_proof1} by $\be_{0,j}^{n_0} \omega_j$ and summing over all $n_0$ leads to
\begin{align}%
\omega_j \Bigl( sI - \sum_{i = 0}^{K} \La_i \Bigr) G_{j}^*(s) &= \frac{\omega_j \oneb}{1 - \be_{0,j}} + \sum_{n_0 = 0}^{\infty} \sum_{k = 1}^{n_0} \be_{0,j}^{n_0} \omega_j \La_{-k} F_{n_0 - k}^*(s) \notag \\
&= \frac{\omega_j \oneb}{1 - \be_{0,j}} + \sum_{k = 1}^{\infty} \be_{0,j}^{k} \omega_j \La_{-k} \sum_{l = 0}^{\infty} \be_{0,j}^{l} F_{l}^*(s).
\end{align}%
Since
\begin{equation}%
0 = \sum_{n_0 = 0}^{\infty} \be_{0,j}^{n_0} \omega_j \La_{K - n_0} = \omega_j \sum_{n_0 = 0}^{K} \be_{0,j}^{n_0} \La_{K - n_0} + \be_{0,j}^K \omega_j \sum_{n_0 = 1}^{\infty} \be_{0,j}^{n_0} \La_{-n_0},
\end{equation}%
we get
\begin{equation}%
\omega_j \Bigl( sI - \sum_{i = 0}^{K} \La_i \Bigr)G_{j}^*(s) = \frac{\omega_j \oneb}{1 - \be_{0,j}} - \omega_j \sum_{i = 0}^{K} \frac{1}{\be_{0,j}^i} \La_i G_{j}^*(s),
\end{equation}%
and thus
\begin{equation}%
\omega_j \Bigl( sI - \sum_{i = 1}^{K} \Bigl(1 - \frac{1}{\be_{0,j}^i} \Bigr) \La_i \Bigr) G_{j}^*(s) = \frac{\omega_j \oneb}{1 - \be_{0,j}}.
\end{equation}%
We now use Assumption~\ref{as:AT_structure_matrices} to simplify the above expression as
\begin{equation}%
\omega_j G_{j}^*(s) = \frac{\omega_j \oneb}{1 - \be_{0,j}} \Bigl( s - \sum_{i = 1}^{K} \Bigl( 1 - \frac{1}{\be_{0,j}^i} \Bigr) \la_i \Bigr)^{-1}.
\end{equation}%
Multiplying by $\ga_j$ and summing over all $j$ finally yields
\begin{equation}%
F^*(s) = \sum_{j = 1}^{K(c + 1)} \frac{\ga_j \omega_j \oneb}{1 - \be_{0,j}} \Bigl( s - \sum_{i = 1}^{K} \Bigl( 1 - \frac{1}{\be_{0,j}^i} \Bigr) \la_i \Bigr)^{-1}.
\end{equation}%
The inverse of the Laplace transform is readily obtained as \eqref{eqn:AT}.
\end{proof}%

\begin{example}%
Again consider Example~\ref{ex:running} with parallel identical servers with $K = 2$. Let $\la_1$ be the arrival rate of single jobs and $\la_2$ the arrival rate of jobs in batches of size two. Since the first passage time to the set $V$ is equal to the waiting time, we have that $F(t) = \Prob{W > t}$. From Theorem~\ref{th:absorptionTime} we may immediately conclude that the waiting time distribution is the mixture of exponentials
\begin{equation}%
\Prob{W > t} = \sum_{j = 1}^{K(c + 1)} \frac{\ga_j \omega_j \oneb}{1 - \be_{0,j}} ~ \euler^{t \bigl( (1 - \frac{1}{\be_{0,j}})\la_1 + (1 - \frac{1}{\be_{0,j}^2}) \la_2 \bigr)}, \quad t \ge 0.
\end{equation}%
\end{example}%
\begin{remark}[$\sum IPP/M/1$ queuing systems]\label{rem:IPP}%
We consider a single exponential server fed by a superposition of independent, interrupted Poisson processes with rate $\la$, labeled $\sum IPP/M/1$. For an $\sum IPP/M/1$ queueing system Assumption~\ref{as:rate} does not hold. However, we can obtain the waiting time distribution explicitly. This suggest that the class of models with an explicit first passage time distribution in the form of mixtures of exponentials can possibly be extended beyond the class satisfying Assumption~\ref{as:rate}. Let us denote the probability that an arriving job sees the system in state $(n_0,m)$ by $a(n_0,m)$, where $m$ now reflects the number of `active' Poisson processes. For the $\sum IPP/M/1$ queuing system this is equal to
\begin{equation}%
a(n_0,m) = N p(n_0,m)m \la,
\end{equation}%
where $N$ is a normalizing constant, given by $N^{-1} = \sum_{n,k} p(n,k) k \la$. Hence, by Theorem~\ref{th:distAggregated}, we get that
\begin{equation}%
a(n_0,m) = N \sum_{j = 1}^{K(c + 1)} \ga_j \be_{0,j}^{n_0} \omega_j(m) m \la, \label{eqn:arrivalDist}
\end{equation}%
where
\begin{equation}%
N^{-1} = \sum_{j = 1}^{K(c + 1)} \sum_{k = 0}^c \frac{\omega_j(k) k \la}{1 - \be_{0,j}}.
\end{equation}%
The waiting time distribution can then be expressed as
\begin{equation}%
\Prob{W > t} = \sum_{n_0 = 1}^\infty \sum_{m = 0}^c a(n_0,m) F_{n_0,m}(t),
\end{equation}%
where $F_{n_0,m}(t)$ is an Erlang-$n_0$ distribution with parameter $\mu$. Note that for this specific model, $F_{n_0,m}(t)$ does not depend on $m$ and the set $V$ is level 0. Substituting expression \eqref{eqn:arrivalDist} for $a(n_0,m)$ and the expression for the Erlang-distribution then yields
\begin{equation}%
\Prob{W > t} = N \sum_{j = 1}^{K(c + 1)} \ga_j \sum_{m = 0}^c \omega_j(m) m \la \frac{\be_{0,j} \euler^{-(1 - \be_{0,j})\mu t}}{1 - \be_{0,j}},
\end{equation}%
which is a sum of exponentials, similar to \eqref{eqn:AT}.
\end{remark}%
%

%%%%%%%%%%%%%%%%%%%%%%%%%%%%%%%%%%%%%%%%%%%%%%%%%%%%%%%
%%%%%%%%%%%%%%%%%%%%%%%%%%%%%%%%%%%%%%%%%%%%%%%%%%%%%%%
%%%%%%%%%%%%%%%%%%%%% NEW SECTION %%%%%%%%%%%%%%%%%%%%%
%%%%%%%%%%%%%%%%%%%%%%%%%%%%%%%%%%%%%%%%%%%%%%%%%%%%%%%
%%%%%%%%%%%%%%%%%%%%%%%%%%%%%%%%%%%%%%%%%%%%%%%%%%%%%%%

\section{Conclusion}%
\label{sec:conclusion}%

We have introduced and analyzed a class of structured multi-dimensional Markov processes with one infinite dimension. By employing a separation of variables technique we showed that the equilibrium distribution can be represented as a linear combination of product forms (in terms of eigenvalues, and eigenvectors of the finite dimensions). We have presented an efficient approach to compute the product forms. Using the separation of variables technique, we were able to decompose the single equation for $2^c K$ eigenvalues that arises from the classical spectral expansion method to $2^c$ equations for fewer eigenvalues per equation, yet still obtaining $2^c K$ eigenvalues. The eigenvectors are then found from the corresponding eigenvalues. The weights of the linear combination were determined by a system of linear equations originating from the boundary equations and normalization condition. For symmetric processes we were able to aggregate the Markov process in terms of a process on a semi-infinite strip of states. The equilibrium distribution of this aggregated process can be expressed in terms of the parameters of the equilibrium distribution of the multi-dimensional Markov process. We have also shown that for these symmetric processes the first passage time to the set $V$ is a mixture of exponentials. For relevant queueing systems this gives the waiting time distribution.

There are at least two extensions of the class presented in this paper that seem worthwhile to explore. %These extensions are worthwhile, as they allow us to obtain the equilibrium distribution of an even larger class of processes.
The first extension concerns the case of nonsymmetric processes with $K \ge 2$. It is presently unclear how to prove that one obtains $2^c K$ roots from $2^c$ equations. The other extension is to allow larger finite dimensions, i.e., letting $n_i \in \{0,1,\ldots,r\}, ~ i = 1,\ldots,c$ with $r > 1$. Due to tractability of the results it was opted not to do this for this paper. We believe that similar results can be obtained for these processes, however, one might need to move away from the power terms $\be_i^{n_i}$ of the finite dimensions, or impose additional structure, such as, for example, present in the $E_k/E_r/c$ queue \cite{adan_EkErc1996}.

%%%%%%%%%%%%%%%%%%%%%%%%%%%%%%%%%%%%%%%%%%%%%%%%%%%%%%%
%%%%%%%%%%%%%%%%%%%%%%%%%%%%%%%%%%%%%%%%%%%%%%%%%%%%%%%
%%%%%%%%%%%%%%%%%%% ACKNOWLEDGEMENT %%%%%%%%%%%%%%%%%%%
%%%%%%%%%%%%%%%%%%%%%%%%%%%%%%%%%%%%%%%%%%%%%%%%%%%%%%%
%%%%%%%%%%%%%%%%%%%%%%%%%%%%%%%%%%%%%%%%%%%%%%%%%%%%%%%

\subsection*{Acknowledgement}%
\label{subsec:acknowledgement}%

This work was supported by a free competition grant from NWO and an ERC starting grant. The authors wish to thank the anonymous referees for their valuable comments on the presentation of the paper.

%%%%%%%%%%%%%%%%%%%%%%%%%%%%%%%%%%%%%%%%%%%%%%%%%%%%%%%
%%%%%%%%%%%%%%%%%%%%%%%%%%%%%%%%%%%%%%%%%%%%%%%%%%%%%%%
%%%%%%%%%%%%%%%%%%%% BIBLIOGRAPHY %%%%%%%%%%%%%%%%%%%%%
%%%%%%%%%%%%%%%%%%%%%%%%%%%%%%%%%%%%%%%%%%%%%%%%%%%%%%%
%%%%%%%%%%%%%%%%%%%%%%%%%%%%%%%%%%%%%%%%%%%%%%%%%%%%%%%

\bibliographystyle{plain}
\bibliography{ProductFormSolutionsClassMPBibliography}

\begin{thebibliography}{10}

\bibitem{adan_locking2000}
I.J.B.F. Adan, T.~De Kok, and J.A.C. Resing.
\newblock A multi-server queueing model with locking.
\newblock {\em European Journal of Operational Research}, 116(2):16--26, 2000.

\bibitem{adan_Rouche2006}
I.J.B.F. Adan, J.S.H.~Van Leeuwaarden, and E.M.M. Winands.
\newblock On the application of {R}ouch\'{e}'s theorem in queueing theory.
\newblock {\em Operations Research Letters}, 34(3):355--360, 2006.

\bibitem{adan_zaragoza1999}
I.J.B.F. Adan and J.A.C. Resing.
\newblock A class of {M}arkov processes on a semi-infinite strip.
\newblock In B.~Plateau, W.J. Stewart, and M.~Silva, editors, {\em Numerical
  Solution of Markov Chains}, pages 41--57, 1999.

\bibitem{adan_batchService2000}
I.J.B.F. Adan and J.A.C. Resing.
\newblock Multi-server batch-service systems.
\newblock {\em Statistica Neerlandica}, 54(2):202--220, 2000.

\bibitem{adan_EkErc1996}
I.J.B.F. Adan, W.A. Van~De Waarsenburg, and J.~Wessels.
\newblock Analyzing {$E_k/E_r/c$} queues.
\newblock {\em European Journal of Operational Research}, 92(1):112--124, 1996.

\bibitem{economou_setupTimes2005}
J.R. Artalejo, A.~Economou, and M.~Lopez-Herrero.
\newblock Analysis of a multiserver queue with setup times.
\newblock {\em Queueing Systems}, 51(1-2):53--76, 2005.

\bibitem{bertsimas_EkC2s1988}
D.~Bertsimas and X.A. Papaconstantinou.
\newblock Analysis of the stationary {$E_k/C_2/s$} queueing system.
\newblock {\em European Journal of Operational Research}, 37(2):272--287, 1988.

\bibitem{buzacott_manufacturingSystems1993}
J.A. Buzacott and J.G. Shanthikumar.
\newblock {\em Stochastic Models of Manufacturing Systems}, volume~4.
\newblock Prentice Hall Englewood Cliffs, NJ, 1993.

\bibitem{bertsimas_FCFSWaitingTimeGGs1988}
Bertsimas D.
\newblock An exact {FCFS} waiting time analysis for a general class of
  {$G/G/s$} queueing systems.
\newblock {\em Queueing Systems}, 3(4):305--320, 1988.

\bibitem{foster_ergodic1953}
F.G. Foster.
\newblock On the stochastic matrices associated with certain queuing processes.
\newblock {\em The Annals of Mathematical Statistics}, 24(3):355--360, 1953.

\bibitem{adan_setupCosts2010}
A.~Gandhi, M.~Harchol-Balter, and I.J.B.F. Adan.
\newblock Server farms with setup costs.
\newblock {\em Performance Evaluation}, 67(11):1123--1138, 2010.

\bibitem{garabedian_PDE1967}
P.R. Garabedian.
\newblock {\em Partial Differential Equations}.
\newblock Wiley, 1967.

\bibitem{hille_analytic1959}
E.~Hille.
\newblock {\em Analytic Function Theory}, volume~1.
\newblock Ginn, 1959.

\bibitem{mitrani_comparisonSpectralAndMatrixGeom1995}
I.~Mitrani and R.~Chakka.
\newblock Spectral expansion solution for a class of {M}arkov models:
  Application and comparison with the matrix-geometric method.
\newblock {\em Performance Evaluation}, 23(3):241--260, 1995.

\bibitem{avi_serviceInterruptions1968}
I.L. Mitrany and B.~Avi-Itzhak.
\newblock A many-server queue with service interruptions.
\newblock {\em Operations Research}, 16(3):628--638, 1968.

\bibitem{neuts_matrixGeometric1981}
M.F. Neuts.
\newblock {\em Matrix-Geometric Solutions in Stochastic Models: An Algorithmic
  Approach}.
\newblock Courier Dover Publications, 1981.

\bibitem{shapiro_ME2s1966}
S.~Shapiro.
\newblock The {$M$}-server queue with {P}oisson input and {G}amma-distributed
  service of order two.
\newblock {\em Operations Research}, 14(4):685--694, 1966.

\bibitem{smit_numericHyper1983}
J.H.A.~De Smit.
\newblock A numerical solution for the multi-server queue with
  hyper-exponential service times.
\newblock {\em Operations Research Letters}, 2(5):217--224, 1983.

\bibitem{smit_diffCustHyper1983}
J.H.A.~De Smit.
\newblock The queue {$GI/M/s$} with customers of different types or the queue
  {$GI/H_m/s$}.
\newblock {\em Advances in Applied Probability}, 15(2):392--419, 1983.

\bibitem{wolff_PASTA1982}
R.W. Wolff.
\newblock Poisson arrivals see time averages.
\newblock {\em Operations Research}, 30(2):223--231, 1982.

\end{thebibliography}

\end{document}